



\documentclass[leqno,twoside,11pt]{amsart}

\usepackage{latexsym,esint,url}

\setlength{\hoffset}{-2cm}
\setlength{\voffset}{0cm}
\setlength{\textwidth}{16.3cm}
\setlength{\textheight}{22cm}

\theoremstyle{plain}

\newtheorem{theorem}{Theorem}[section]

\newtheorem{lemma}[theorem]{Lemma}
\newtheorem{proposition}[theorem]{Proposition}
\newtheorem{definition}[theorem]{Definition}
\theoremstyle{definition}

\newtheorem{remark}[theorem]{Remark}

\numberwithin{equation}{section}
\numberwithin{figure}{section}

\renewcommand{\d}{\partial}
\newcommand{\Tr}{\mbox{Trace }}
\newcommand{\e}{\varepsilon}
\newcommand{\RR}{\mathbb R}
\begin{document}

\title[Matching properties of infinitesimal isometries for developable shells]
{Infinitesimal isometries on developable surfaces \\
and asymptotic theories for thin developable shells}
\author{Peter Hornung, Marta Lewicka and Mohammad Reza Pakzad}
\address{Peter Hornung, Department of Mathematical Sciences, University of Bath, Bath BA2 7AY, UK}
\address{Marta Lewicka,  Rutgers University, Department of Mathematics,
110 Frelinghuysen Rd., Piscataway, NJ 08854-8019}
\address{Mohammad Reza Pakzad, University of Pittsburgh, Department of Mathematics,
139 University Place, Pittsburgh, PA 15260}
\email{p.hornung@bath.ac.uk, lewicka@math.rutgers.edu, pakzad@pitt.edu}

\begin{abstract}
We perform a detailed analysis of first order Sobolev-regular infinitesimal isometries on
developable surfaces without affine regions.
We prove that given enough regularity of the surface, any first order infinitesimal
isometry can be matched to an infinitesimal isometry  of an
arbitrarily high order. We discuss the implications
of this result for the elasticity of thin developable shells. \\

\noindent {\sc Keywords.} developable surfaces, shell theories, nonlinear elasticity, calculus of variations

\noindent {\sc Mathematics Subject Classification.} 74K20, 74B20
\end{abstract}


\maketitle

\section{Introduction}

The derivation of asymptotic theories for thin elastic films has been
a longstanding problem in the mathematical theory of elasticity \cite{ciarbook}. 
Recently, various lower dimensional theories have been  rigorously
derived from the nonlinear $3$ dimensional model, 
through $\Gamma$-convergence \cite{dalmaso} methods.  
Consequently, what seemed to be competing and contradictory theories for elastica (rods,
plates, shells, etc) are now revealed to be each
valid in their own specific range of parameters such as material elastic
constants, boundary conditions and force magnitudes \cite{LR1, FJMgeo,
  FJMhier, Mor-Mue, CMM}. In this line, Friesecke, James and M\"uller
gave a detailed description of the so called hierarchy of plate
theories in \cite{FJMhier}, corresponding to distinct energy scaling
laws in terms  of the plate thickness. Similar results have been established for elastic shells
\cite{LeD-Rao, FJMM_cr, lemopa1, lemopa2, lemopa3}, 
however the description is still far from being complete.

In \cite{LePa2} Lewicka and Pakzad put forward a conjecture regarding
existence of infinitely many small slope  shell theories
each valid for a corresponding range of energy scalings. This
conjecture, based on formal asymptotic expansions, is in
accordance with all therigorously obtained results for plates and shells.
It predicts the form of the 2 dimensional 
limit energy functional, and identifies the space of admissible
deformations as infinitesimal isometries of a given integer order
$N>0$ determined by the magnitude of the elastic energy.  Hence, 
the influence of shell's geometry on its qualitative response to an
external force, i.e. the shell's rigidity, is reflected in a hierarchy of functional spaces of
isometries (and infinitesimal isometries) arising as constraints of
the derived theories. 

In certain cases, a given $N$th order infinitesimal isometry can be modified by higher order 
corrections to yield an infinitesimal isometry of order $M>N$, a property to which we refer to by 
{\it matching property of infinitesimal isometries}. This feature, 
combined with certain density results for spaces of isometries, causes
the theories corresponding to orders of infinitesimal isometries between $N$ and $M$ 
to collapse all into one and the same 
theory.   Examples of such behavior are observed for plates \cite{FJMhier}, where any 
second order infinitesimal isometry can be matched to an exact
isometry (and hence to an $M$th-order isometry 
for all $M\in \mathbb N$), and for convex shells \cite{lemopa3}, where any first order
infinitesimal isometry enjoys the same property. 
The effects of these observations on the elasticity of  thin films are drastic. 
A plate possesses three types of small-slope theories: the linear theory, 
the von K\'arm\'an theory and the linearized Kirchhoff theory \cite{FJMhier}, whereas the 
only small slope theory for a  convex shell is the 
linear theory \cite{lemopa3}: a convex shell transitions directly from the linear regime
to fully nonlinear bending if the applied forces are adequately 
increased. In other words,  while the von K\'arm\'an theory describes buckling of thin plates,
the equivalent variationally correct theory for elliptic shells is the
purely nonlinear bending.  

\medskip

In this paper, we focus on developable surfaces (without flat regions).
This class includes smooth cylindrical shells which
are ubiquitous in nature and technology over a range of length
scales.  An example of a recently discovered structure is carbon
nanotubes,  i.e. molecular-scale tubes of graphitic carbon with
outstanding rigidity properties \cite{Harris}: they are among the stiffest materials in terms
of the tensile strength  and elastic modulus,  but they easily
buckle  under compressive, torsional or bending stress \cite{Jensen}.
The common approach in studying buckling phenomenon for cylindrical
tubes has been to use the von K\'arm\'an-Donnel equations
\cite{mahavazir, peletier}.
However, as we establish here, the proper sublinear theory for this purpose
is, again,  the purely nonlinear bending theory.
It seems likely that the von K\'arm\'an-Donnel equations could be
rigorously derived and valid in another
scaling limit, e.g. when the radius of the cylinder is very large as the thickness vanishes.

The key ingredient of our analysis is a study of $W^{2,2}$ first
order isometries on developable surfaces, of which we address
regularity, rigidity, density and matching properties. Our results
depend on the regularity of the surface and a certain mild convexity property. 
More precisely, we establish that any $\mathcal{C}^{2N-1,1}$ regular
first order infinitesimal isometry on a developable $\mathcal{C}^{2N,1}$ surface
with a positive lower bound on the mean curvature, can be matched to an $N$th-order infinitesimal 
isometry. Combined with a density result for $W^{2,2}$ first order isometries on such surfaces, 
we prove that the limit theories for the energy scalings of the order lower than 
$h^{2+2/N}$ collapse all into the linear theory. 
Our method is to inductively solve the linearized metric equation sym$\nabla w= B$ on the surface
with suitably chosen right hand sides, 
a process during which we lose regularity: only if the surface is
$\mathcal{C}^\infty$ we can establish the total collapse of all small slope 
The importance of the solvability of sym$\nabla w= B$
has been noted in \cite{sanchez}, 
see also \cite{GSP} and \cite{choi} in regards to relations of
rigidity and elasticity.  
We remark here that a surface, e.g. a regular cylinder, may well be ill-inhibited according 
to the definition by Sanchez-Palencia  in \cite{sanchez} and yet satisfy the adequate
matching properties resulting in the aforementioned collapse.

Our analysis can be generalized to piecewise smooth surfaces which satisfy the 
convexity property as above, on each component. The question of
existence of a developable or non-developable surface 
with no flat regions which shows a different elastic behavior
(i.e. validity of an intermediate theory between the linear one and
the purely nonlinear bending) remains open. 

\medskip

Finally, a word on the developability property of surfaces of
vanishing Gaussian curvature,  from which the term {\it developable} 
is derived, is to the point. For each point on such a
surface there exists a straight  segment passing through it and 
lying on the surface, in a characteristic 
direction. Developable surfaces are also locally identified with
isometric images of domains in $\mathbb R^2$;  this 
last property and, in particular, the developable structure of
isometries of flat domains is heavily exploited in this paper. Such structure was established for 
$\mathcal{C}^2$ isometries in \cite{HartmanNirenberg}, for $\mathcal{C}^1$ isometries with
total zero curvature in \cite[Chapter II]{Po 56}, \cite[Chapter IX]{Po 73} 
and for $W^{2,2}$ isometries in \cite{kirchheimthesis, Pak}.
In \cite{Pak} Pakzad proved that any $W^{2,2}$ isometry on a convex
domain can  be approximated in strong norm by smooth isometries, and in \cite{MuPa2} the boundary 
regularity was discussed. Lately, Hornung systematically represented 
and generalized these results in \cite{Ho0, Ho1, Ho2} whose
terminology we will adapt for the sake of simplicity and
completeness. The above mentioned results  have had other applications 
in nonlinear elasticity \cite{CD08, LM, Ho3}.
 
The paper is organized as follows. In section \ref{prelim}, we introduce and
review preliminary facts about developable  surfaces.
In section \ref{equation}, we study the linearized equation of
isometric immersion of a developable surface and prove
existence of a solution operator with suitable bounds in Theorem
\ref{prop1}. We proceed to study the space of $W^{2,2}$ first order
infinitesimal isometries and prove a compensated regularity and rigidity property of
such mappings in Theorem \ref{prop1-1storder} of section \ref{space}.
We then use these results to prove in Theorem \ref{th_nth-intro} that
given enough regularity of the surface, any first order infinitesimal
isometry can be matched to a higher order one.
Combined with a straightforward density result in Theorem
\ref{th_density-intro},  we are finally able to derive the main
application of this paper in elasticity theory,  namely Theorem
\ref{thlimsup-intro},  which is the counterpart to Theorem
\ref{thliminf-intro} for deducing the $\Gamma$-limit of
3 dimensional nonlinear elasticity for thin developable shells.

\bigskip

\noindent{\bf Acknowledgments.}
P.H. was supported by EPSRC grant EP/F048769/1.
M.L. was partially supported by the NSF grants DMS-0707275 and DMS-0846996,
and by the Polish MN grant N N201 547438.
M.R.P. was partially supported by the University of Pittsburgh grant CRDF-9003034 and by
the NSF grant DMS-0907844.

\section{Developable surfaces}\label{prelim}

Let $\Omega\subset\RR^2$ be a bounded Lipschitz domain,
and let $u\in \mathcal{C}^{2,1}(\Omega, \RR^3)$ be an isometric immersion of $\Omega$ into $\RR^3$:
$$ \d_i u(x)\cdot\d_j u(x) = \delta_{ij} \quad \mbox{ for all
}x\in\Omega. $$
A classical result \cite{HartmanNirenberg} asserts that, away from
affine regions, such $u$ must be developable: the domain $\Omega$ can
be decomposed (up to a controlled remainder)
into finitely many subdomains on which $u$ is affine and finitely
many subdomains on which $u$ admits a line of curvature parametrization, see e.g. Theorem 4
in \cite{Ho1}. In light of this result, it is natural to restrict ourselves to the situation
when $\Omega$ can be covered by a single line of curvature chart.

We now make the above more precise. Let $T > 0$, 
let $\Gamma\in \mathcal{C}^{1,1}( [0, T], \RR^2)$ be an arclength
parametrized curve, 
and let $s^{\pm}\in \mathcal{C}^{0,1}([0,T])$ be positive functions
(a rationale for assuming Lipschitz continuity here can be found in Lemma 2.2 in \cite{Ho3}).
Following the notation in \cite{Ho2}, we set:
$$ N = (\Gamma')^{\perp}\in \mathcal{C}^{0,1}([0,T],\RR^2) \quad
\mbox{ and } \quad 
\kappa = \Gamma''\cdot N\in L^\infty(0,T)$$ the unit normal and the curvature of $\Gamma$.
We introduce the bounded domain:
$$ M_{s^{\pm}} = \{(s, t) : t\in (0, T), ~~
s\in (-s^-(t), s^+(t)) \}, $$
the mapping $\Phi : M_{s^{\pm}} \to \RR^2$ given by:
$$ \Phi(s, t) = \Gamma(t) + sN(t) $$
and the open line segments:
$$ [\Gamma(t)] = \{\Gamma(t) + sN(t) : s\in (-s^-(t), s^+(t))\}. $$
From now on we assume that:
\begin{equation}
\label{admissible}
[\Gamma(t_1)]\cap[\Gamma(t_2)] = \emptyset \quad \mbox{ for all unequal }t_1, t_2\in [0, T].
\end{equation}
This condition will be needed to define a  developable isometry $u$, 
which maps segments $[\Gamma(t)]$ to segments in $\mathbb{R}^3$.

The next lemma is a consequence of Proposition 2 and Proposition 1 in \cite{Ho2}.

\begin{lemma}
\label{lem1}
Given $\kappa_n\in L^2(0,T)$, let  $r = (\gamma', v, n)^T\in
W^{1,2}((0,T), SO(3))$ satisfy the ODE:
\begin{equation}\label{darboux}
 r'=\left( \begin{array}{ccc}
  0 & \kappa & \kappa_n\\
  -\kappa & 0 & 0\\
  -\kappa_n & 0 & 0
 \end{array}\right) r
\end{equation}
with initial value
$r(0) = Id$, where we set $\gamma(t) = \int_0^t\gamma'$. Define the mapping:
\begin{equation}\label{induced-1} 
u : \Phi(M_{s^{\pm}}) \to\RR^3, \qquad
u(\Phi(s,t)) = \gamma(t) + sv(t) \quad \forall (s,t)\in M_{s^{\pm}}.
\end{equation}
Then $u$ is well defined and:
$$ u\in W^{2,2}_{\rm loc}(\Phi(M_{s^{\pm}}), \RR^3) $$
is an isometric immersion.
Moreover, $\nabla u(\Phi)\in \mathcal{C}^0(\bar{M}_{s^{\pm}}, \RR^{3\times 2})$
with:
\begin{equation}\label{gradient}
 \nabla u(\Phi(s,t)) =
\gamma'(t)\otimes\Gamma'(t) + v(t)\otimes N(t) \qquad \forall (s,t)\in
\bar M_{s^{\pm}}.
\end{equation}
and the functions:
$$ a_{ij} =  (\d_1 u\times \d_2 u) \cdot \d_i\d_j u $$
satisfy:
\begin{equation}\label{uijAij}
\d_i\d_j u = a_{ij} (\d_1 u\times \d_2 u) \qquad \forall i,j=1,2,
\end{equation}
\begin{equation}\label{A}
a_{ij}(\Phi(s,t)) =
\frac{\kappa_n(t)}{1-s\kappa(t)}\Gamma_i'(t)\Gamma_j'(t) \quad \mbox{ for
almost every }(s,t) \in M_{s^{\pm}}.
\end{equation}
If, in addition:
\begin{equation}\label{LL^11}
\int_0^T\Bigg(\int_{s^-(t)}^{s^+(t)} \frac{\kappa_n^2(t)}{1 - s\kappa(t)} \mathrm{d}s\Bigg)\mathrm{d}t
< \infty
\end{equation}
then $u\in W^{2,2}(\Phi(M_{s^{\pm}})), \RR^3)$ with:
\begin{equation}\label{energie}
\int_{\Phi(M_{s^{\pm}})} |\nabla^2 u(x)|^2\mathrm{d}x = \int_0^T
\Big(\int_{s^-(t)}^{s^+(t)}\frac{\kappa_n^2(t)}{1-s\kappa(t)}\
\mathrm{d}s\Big)\mathrm{d}t.
\end{equation}
\end{lemma}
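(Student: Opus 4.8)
The plan is to build the parametrized surface $u$ explicitly from the frame $r=(\gamma',v,n)^T$ and then verify, in order, that it is well defined, that it is an isometric immersion, the formula for $\nabla u$, the second-derivative identities, and finally the $W^{2,2}$ bound. First I would note that since $r\in W^{1,2}((0,T),SO(3))$ solves the antisymmetric ODE \eqref{darboux}, the rows $\gamma',v,n$ form an orthonormal frame for a.e. $t$, and $\gamma\in W^{2,2}$ with $|\gamma'|=1$, so $\gamma$ is an arclength-parametrized curve; the map $(s,t)\mapsto\gamma(t)+sv(t)$ is the natural candidate ruled surface. To see that $u$ is well defined on $\Phi(M_{s^\pm})$, I would invoke condition \eqref{admissible}: since distinct segments $[\Gamma(t_1)]$ and $[\Gamma(t_2)]$ are disjoint, $\Phi$ is injective on $M_{s^\pm}$, so $u\circ\Phi$ being a genuine function of $(s,t)$ descends to a well-defined function of $x=\Phi(s,t)$.

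Next I would compute derivatives in the $(s,t)$ chart. Differentiating $u(\Phi(s,t))=\gamma(t)+sv(t)$ gives $\partial_s(u\circ\Phi)=v$ and $\partial_t(u\circ\Phi)=\gamma'+sv'=\gamma'-s\kappa\gamma'=(1-s\kappa)\gamma'$, using $v'=-\kappa\gamma'$ from \eqref{darboux}. On the other hand $\partial_s\Phi=N$ and $\partial_t\Phi=\Gamma'+sN'=\Gamma'-s\kappa\Gamma'=(1-s\kappa)\Gamma'$, so the Jacobian of $\Phi$ is $(1-s\kappa)$ times a rotation, invertible wherever $1-s\kappa\neq0$; positivity of $s^\pm$ together with the geometric admissibility (segments stay in the domain) keeps $1-s\kappa>0$. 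Applying the chain rule $\nabla(u\circ\Phi)=(\nabla u\circ\Phi)\,\nabla\Phi$ and inverting $\nabla\Phi$ in the orthonormal basis $\{\Gamma',N\}$ yields exactly \eqref{gradient}: $\nabla u(\Phi)=\gamma'\otimes\Gamma'+v\otimes N$. Since $\{\gamma',v\}$ and $\{\Gamma',N\}$ are each orthonormal, $(\nabla u)^T\nabla u=\Gamma'\otimes\Gamma'+N\otimes N=\mathrm{Id}$, so $u$ is an isometric immersion; continuity of $\nabla u(\Phi)$ on $\bar M_{s^\pm}$ follows from continuity of $r$ (hence $\gamma',v$) and of $\Gamma',N$ on $[0,T]$, and from the fact that $1-s\kappa$ is bounded below. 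For the regularity claim $u\in W^{2,2}_{\rm loc}$ I would differentiate \eqref{gradient} once more: $\partial_s$ and $\partial_t$ of $\gamma'\otimes\Gamma'+v\otimes N$ produce terms with $\gamma''=\kappa v+\kappa_n n$, $v'=-\kappa\gamma'$, $\Gamma''$, $N'$, all in $L^2_{\rm loc}$ after pulling back through $\Phi^{-1}$ (the only denominator is $1-s\kappa$, locally bounded away from zero), so $\nabla^2 u\in L^2_{\rm loc}$.

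For the second-fundamental-form identities \eqref{uijAij}–\eqref{A}, the key observation is that the unit normal to the surface is $n(t)$: indeed $\partial_1 u\times\partial_2 u$ in the moving frame equals $\gamma'\times v=n$ up to the orientation/sign determined by $r\in SO(3)$, and since $\partial_i\partial_j u$ lies along this normal for a developable isometry (the tangent plane is spanned by $\gamma',v$ and the surface is ruled, so mixed and $ss$ derivatives vanish in that plane), we get $\partial_i\partial_j u=a_{ij}n$ with $a_{ij}=n\cdot\partial_i\partial_j u$. To extract the explicit formula I would again pull back: the only nonvanishing pullback second derivative is $\partial_t^2(u\circ\Phi)=\partial_t((1-s\kappa)\gamma')=-s\kappa'\gamma'+(1-s\kappa)\gamma''$, whose normal component is $(1-s\kappa)\,\gamma''\cdot n=(1-s\kappa)\kappa_n$. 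Converting the Hessian from $(s,t)$-coordinates to $x$-coordinates introduces the inverse-Jacobian factors; since $\partial_t=(1-s\kappa)\Gamma_j'\,\partial_{x_j}$ at the level of directional derivatives, a factor $(1-s\kappa)^{-1}$ appears twice against one factor $(1-s\kappa)$, giving $a_{ij}(\Phi(s,t))=\frac{\kappa_n(t)}{1-s\kappa(t)}\Gamma_i'(t)\Gamma_j'(t)$, which is \eqref{A}. Finally, for the global $W^{2,2}$ statement under hypothesis \eqref{LL^11}, I would compute $|\nabla^2 u|^2=\sum_{ij}a_{ij}^2=\big(\frac{\kappa_n}{1-s\kappa}\big)^2(|\Gamma'|^2)^2=\big(\frac{\kappa_n}{1-s\kappa}\big)^2$ since $|\Gamma'|=1$, change variables $x=\Phi(s,t)$ with $|\det\nabla\Phi|=1-s\kappa$, so $\int_{\Phi(M_{s^\pm})}|\nabla^2u|^2\,dx=\int_0^T\int_{s^-(t)}^{s^+(t)}\frac{\kappa_n^2}{(1-s\kappa)^2}(1-s\kappa)\,ds\,dt=\int_0^T\big(\int_{s^-(t)}^{s^+(t)}\frac{\kappa_n^2(t)}{1-s\kappa(t)}\,ds\big)dt$, which is \eqref{energie}; finiteness of this integral is precisely \eqref{LL^11}, so $u\in W^{2,2}$.

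I expect the main obstacle to be the careful bookkeeping around the Jacobian factor $1-s\kappa$: establishing that it is bounded away from zero on (compact subsets of) $M_{s^\pm}$, tracking exactly how many powers of it appear when converting first and second derivatives between the $(s,t)$-chart and Cartesian coordinates, and handling the limited regularity ($\kappa,\kappa_n$ only in $L^\infty$, resp. $L^2$) so that all the chain-rule manipulations are justified in the Sobolev rather than classical sense — in particular verifying the change-of-variables formula for $W^{2,2}_{\rm loc}$ maps under the bi-Lipschitz chart $\Phi$. The isometry property and the normal-direction identities are comparatively soft once the frame structure of $r$ is in hand; the quantitative energy identity \eqref{energie} is then a one-line change of variables. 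Since these computations are essentially those behind Propositions 1 and 2 of \cite{Ho2}, I would largely cite them and only spell out the reduction to the single-chart setting described by \eqref{admissible}.
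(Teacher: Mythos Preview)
Your proposal is correct and aligns with the paper's treatment: the paper does not give an independent proof of this lemma but simply states that it is a consequence of Propositions~1 and~2 in \cite{Ho2}, which is exactly what you cite at the end. Your sketch in fact spells out the frame computations and the change-of-variables argument behind those propositions in more detail than the paper itself provides, and the bookkeeping you flag around the Jacobian factor $1-s\kappa$ is indeed the only delicate point.
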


In order to keep the hypotheses short, we make the following definition:

\begin{definition}
\label{def0}
A surface $S\subset\RR^3$ is said to be developable of class $\mathcal{C}^{k,1}$ if
there are $\Gamma$, $N$, $s^{\pm}$, $\gamma$, $v$, $n$, $\kappa$, $\kappa_n$, $\Phi$
and $u$ as in Lemma \ref{lem1} such that
\begin{equation}
\label{chart}
\Phi(M_{s^{\pm}}) = \Omega,
\end{equation}
$u\in \mathcal{C}^{k,1}(\overline{\Omega}, \RR^3)$ and $S = u(\Omega)$.
\end{definition}

\begin{remark}
\begin{enumerate}
\item The curve $\gamma$ is a line of curvature of surface $S$.
Hence the mapping $(s, t)\mapsto\gamma(t) + sv(t)$
is a line of curvature parametrization of $S$
and the condition \eqref{chart} is the precise formulation of our
assertion that $\Omega$ is covered by a single line of curvature
chart.
\item The moving frame $r$ is the Darboux frame on the surface $S$ along
$\gamma$. Therefore, (\ref{darboux})  indicates that the
geodesic curvature of $\gamma$ coincides with the curvature $\kappa$ of its
preimage $\Gamma$, and that its geodesic torsion vanishes.  This is
naturally expected as $u$ is an isometry. In the same vein, $\kappa_n$ is the normal
curvature of $\gamma$ on $S$.

\item Of course, $n$ is the unit normal vector to $S$ and
  $-[a_{ij}]$ is the second fundamental form
(expressed in $u$-coordinates).  Equation \eqref{A} shows that
it has rank one or zero, hence the Gauss curvature $\det [a_{ij}]$ is zero.

\item Condition (\ref{LL^11}) implies that
$\kappa_n = 0$ almost everywhere on the set:
$$ I_0 = \Big\{t\in (0,T) : \kappa(t)\in\{1/{s^-(t)},
1/{s^+(t)}\}\Big\}. $$
\end{enumerate}
\end{remark}

Now, by Proposition 10 in \cite{Ho1} and in view of \eqref{admissible}
we obtain:
\begin{equation}
\label{locadm}
\kappa(t) \in \left[-\frac{1}{s^-(t)}, \frac{1}{s^+(t)}\right]\mbox{ for almost every }t\in (0, T).
\end{equation}
Moreover, the Lipschitz map $\Phi$ with:
\begin{equation}
\label{det}
\det\nabla\Phi(s,t) = -(1 - s\kappa(t)) \quad \mbox{ for almost every }(s,t)\in M_{s^{\pm}},
\end{equation}
is a homeomorphism from $M_{s^{\pm}}$ onto $\Omega$.
We will frequently make the extra assumption that mean curvature of $u$ be bounded
away from zero:
\begin{equation}
\label{convex}
|\mbox{trace } [a_{ij}](x)| > 0\mbox{ for all }x\in\overline{\Omega}.
\end{equation}
Then \eqref{A} and \eqref{LL^11} imply
that there is $\delta > 0$ such that
\begin{equation}
\label{uniadm}
\kappa(t) \in \left[\delta-\frac{1}{s^-(t)}, \frac{1}{s^+(t)} - \delta\right]
\mbox{ for almost every }t\in (0, T).
\end{equation}
By Proposition 10 (iii) in \cite{Ho1} and the bounds \eqref{uniadm},
$\Phi^{-1}$ is Lipschitz as well (this assertion is
generally false if \eqref{uniadm} is violated).

\begin{lemma}
\label{lemregular}
Assume that $S$ is developable of class $\mathcal{C}^{k, 1}$ for some $k\geq 2$ and
that \eqref{convex} holds. Then
$\kappa, \kappa_n\in \mathcal{C}^{k-2, 1}$ and $\Phi$, $\Phi^{-1}$ are $\mathcal{C}^{k-1, 1}$
up to the boundary of their respective domains $M_{s^{\pm}}$ and $\Omega$.
\end{lemma}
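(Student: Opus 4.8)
The heart of the argument is that the reciprocal mean curvature is affine along the rulings. From $u\in\mathcal{C}^{k,1}(\overline{\Omega})$ and the definition of $a_{ij}$ in Lemma \ref{lem1} we get $a_{ij}\in\mathcal{C}^{k-2,1}(\overline{\Omega})$, hence $H:=a_{11}+a_{22}=\mathrm{trace}\,[a_{ij}]\in\mathcal{C}^{k-2,1}(\overline{\Omega})$; by \eqref{convex} and compactness of $\overline{\Omega}$ we have $|H|\geq c_0>0$, so $W:=1/H\in\mathcal{C}^{k-2,1}(\overline{\Omega})$. Taking traces in \eqref{A} gives $H(\Phi(s,t))=\kappa_n(t)/(1-s\kappa(t))$, that is
$$W(\Phi(s,t))=\frac{1-s\kappa(t)}{\kappa_n(t)}\qquad\text{for a.e. }(s,t)\in M_{s^\pm}.$$
Since $W\circ\Phi$ is continuous on $\overline{M}_{s^\pm}$ and, for almost every fixed $t$, coincides there with an affine function of $s$, the identity in fact holds at every $(s,t)$. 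Fixing $\eta>0$ so small that $(\pm\eta,t)\in M_{s^\pm}$ for all $t\in[0,T]$ — possible since $s^\pm$ are continuous and positive on the compact $[0,T]$ — and reading off from this affine dependence the value at $s=0$ and the slope, we obtain, for the everywhere-defined representatives of $\kappa_n,\kappa\in L^\infty(0,T)$,
$$\kappa_n(t)=\frac{1}{W(\Gamma(t))},\qquad \kappa(t)=-\,\kappa_n(t)\,\frac{W\big(\Gamma(t)+\eta N(t)\big)-W\big(\Gamma(t)-\eta N(t)\big)}{2\eta}.$$
Crucially, the right-hand sides involve $W$ only through divided differences at the \emph{fixed} slices $s=0,\pm\eta$: no $s$-derivative of $W$ enters, which is what averts the derivative loss that would make the bootstrap below circular.

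I then prove by induction that $\Gamma\in\mathcal{C}^{j,1}([0,T],\RR^2)$ for $j=1,\dots,k$, the case $j=1$ being part of Definition \ref{def0}. Suppose $\Gamma\in\mathcal{C}^{j,1}$ with $j\leq k-1$. Then $N=(\Gamma')^\perp\in\mathcal{C}^{j-1,1}$, so the $\overline{\Omega}$-valued curves $t\mapsto\Gamma(t)$ and $t\mapsto\Gamma(t)\pm\eta N(t)$ are of class $\mathcal{C}^{j,1}$ and $\mathcal{C}^{j-1,1}$, respectively. Composing with $W\in\mathcal{C}^{k-2,1}(\overline{\Omega})$ and using $j-1\leq k-2$, the two displayed formulas give $\kappa_n\in\mathcal{C}^{j-1,1}$ — bounded away from zero since $|W|$ is — and therefore $\kappa\in\mathcal{C}^{j-1,1}$. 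Since $|\Gamma'|\equiv1$ we have $\Gamma''\perp\Gamma'$, hence $\Gamma''=(\Gamma''\cdot N)N=\kappa N\in\mathcal{C}^{j-1,1}$, i.e. $\Gamma\in\mathcal{C}^{j+1,1}$. Iterating for $j=1,\dots,k-1$ we reach $\Gamma\in\mathcal{C}^{k,1}$, and the last step ($j=k-1$) in particular yields $\kappa,\kappa_n\in\mathcal{C}^{k-2,1}(0,T)$.

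It remains to deduce the regularity of $\Phi$ and $\Phi^{-1}$. From $\Gamma\in\mathcal{C}^{k,1}$ we get $N=(\Gamma')^\perp\in\mathcal{C}^{k-1,1}$, hence $\Phi(s,t)=\Gamma(t)+sN(t)\in\mathcal{C}^{k-1,1}$ up to $\partial M_{s^\pm}$, and so $\nabla\Phi\in\mathcal{C}^{k-2,1}$; by \eqref{det}, $\det\nabla\Phi=-(1-s\kappa)$, which by \eqref{uniadm} is bounded away from zero on the whole of $\overline{M}_{s^\pm}$. Since $\Phi$ is moreover a homeomorphism onto $\Omega$, the inverse function theorem applied locally and patched gives $\Phi^{-1}\in\mathcal{C}^{k-1,1}(\overline{\Omega})$. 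The only genuinely delicate points are the passage from the almost-everywhere identity \eqref{A} to a pointwise statement on the slices $s=0,\pm\eta$ (for which the continuity of $W\circ\Phi$ is used) and the uniform positivity of $1-s\kappa$ up to $\partial M_{s^\pm}$ (guaranteed by \eqref{uniadm}); the remainder is routine bookkeeping of products and compositions in the spaces $\mathcal{C}^{m,1}$.
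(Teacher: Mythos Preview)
Your proof is correct and runs the same bootstrap as the paper's: both exploit the trace identity $\mathrm{Tr}\,[a_{ij}](\Phi)=\kappa_n/(1-s\kappa)$ from \eqref{A} together with $a_{ij}\in\mathcal{C}^{k-2,1}$ to iteratively upgrade the regularity of $\kappa,\kappa_n$, hence of $\Gamma$, and finally of $\Phi,\Phi^{-1}$. The paper works directly with $H=\mathrm{Tr}\,[a_{ij}]$ and at each step upgrades the regularity of \emph{both} $\Phi$ and $\Phi^{-1}$, invoking the chain rule (Lemma \ref{chainrule}) for $H\circ\Phi$ through the improved diffeomorphism. You instead pass to the reciprocal $W=1/H$, observe that $W\circ\Phi$ is \emph{affine} in $s$, and read off $\kappa_n,\kappa$ as divided differences of $W$ along the three fixed curves $\Gamma,\Gamma\pm\eta N$; the induction then runs purely on the regularity of $\Gamma$ (and $N=(\Gamma')^\perp$), with $\Phi^{-1}$ appearing only once at the end. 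This buys a slightly cleaner iteration that sidesteps repeated appeals to the inverse function theorem and to Lemma \ref{chainrule}, at the modest cost of the extra (elementary) observation about the affine structure of $1/H$ along the rulings.
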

\begin{proof}
The hypothesis on $S$ imply that $a_{ij}\in \mathcal{C}^{k-2,1}(\overline{\Omega})$.
By continuity of $a_{ij}$ and $\Phi$ and by \eqref{convex}, we may assume without loss of
generality that $\Tr [a_{ij}](\Phi) \geq c > 0$ on $M_{s^{\pm}}$. In
view of \eqref{A} we have:
\begin{equation}
\label{lemreg-1}
\Tr [a_{ij}](\Phi) = \frac{\kappa_n}{1 - s\kappa}.
\end{equation}
Since $\Phi$ is bilipschitz, this implies that the right-hand side of \eqref{lemreg-1}
is Lipschitz. As $\kappa_n \geq c > 0$ it follows that
$\kappa, \kappa_n$ are Lipschitz. Thus $\Gamma'$, $N$ belong to $\mathcal{C}^{1,1}$,
hence so does $\Phi$. By \eqref{det}, \eqref{uniadm}, the Jacobian of $\Phi$ is uniformly
bounded away from zero on $M_{s^{\pm}}$, and so $\Phi^{-1}$ belongs to $\mathcal{C}^{1,1}$, too.
If $k \geq 3$ then we return to \eqref{lemreg-1},  apply Lemma \ref{chainrule} and  
argue as before to conclude that
$\kappa, \kappa_n$ are in $\mathcal{C}^{1,1}$. The conclusion follows by iteration.
\end{proof}

In the proof of Lemma \ref{lemregular} we used the following
chain rule, which is a particular case of Theorem 2.2.2 from \cite{ziemer}.

\begin{lemma}\label{chainrule}
Let $U_1, U_2\subset \RR^n$ be two open, bounded sets and let $\Phi : U_1\to U_2$
be a bilipschitz homeomorphism.
Then $f\in W^{1,2} (U_2)$ if and only if $f\circ \Phi \in W^{1,2} (U_1)$. If this is the
case, then the chain rule applies:
\begin{equation}\label{cr}
\nabla (f\circ \Phi) = \big((\nabla f) \circ \Phi\big) \nabla \Phi \quad \mbox{ a.e. in }\,\, U_1.
\end{equation}
\end{lemma}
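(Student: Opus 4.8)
The plan is to combine three standard ingredients: the a.e.\ differentiability of bilipschitz maps, the change of variables (area) formula, and the Meyers--Serrin density of smooth functions in $W^{1,2}$ of an arbitrary open set.

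I would first record the measure-theoretic preliminaries. Let $L$ be a common Lipschitz constant for $\Phi$ and $\Phi^{-1}$. By Rademacher's theorem $\Phi$ is differentiable a.e.\ in $U_1$ and $\Phi^{-1}$ a.e.\ in $U_2$; since Lipschitz maps have the Lusin (N) property, both $\Phi$ and $\Phi^{-1}$ send null sets to null sets. Consequently a.e.\ $x\in U_1$ is a differentiability point of $\Phi$ such that $\Phi(x)$ is a differentiability point of $\Phi^{-1}$, and differentiating the identity $\Phi^{-1}\circ\Phi=\mathrm{id}$ there shows that $\nabla\Phi(x)$ is invertible with $\|\nabla\Phi(x)\|\le L$ and $|\det\nabla\Phi(x)|\ge L^{-n}$. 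Hence $J_\Phi:=|\det\nabla\Phi|$ satisfies $L^{-n}\le J_\Phi\le L^n$ a.e., and the area formula for the injective Lipschitz map $\Phi$ reads
\begin{equation*}
\int_{U_1}(g\circ\Phi)\,J_\Phi\,\mathrm{d}x=\int_{U_2}g\,\mathrm{d}y\qquad\text{for all measurable }g\ge 0 .
\end{equation*}
Together with the two-sided bound on $J_\Phi$ this gives, for every $1\le p<\infty$, the norm equivalence $\|h\circ\Phi\|_{L^p(U_1)}\simeq\|h\|_{L^p(U_2)}$; in particular the pullback $h\mapsto h\circ\Phi$ is well defined on $L^p$ classes and, using the (N) property of $\Phi^{-1}$, on measurable functions modulo null sets, which gives meaning to $(\nabla f)\circ\Phi$.

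Next I would prove \eqref{cr}, first for $f\in C^\infty(U_2)\cap W^{1,2}(U_2)$: then $f\circ\Phi$ is Lipschitz on $U_1$, so it lies in $W^{1,\infty}_{\mathrm{loc}}(U_1)$ and its weak gradient agrees a.e.\ with its classical gradient, and at every differentiability point $x$ of $\Phi$ the classical chain rule gives $\nabla(f\circ\Phi)(x)=\nabla f(\Phi(x))\nabla\Phi(x)$; since such $x$ have full measure, \eqref{cr} holds. For general $f\in W^{1,2}(U_2)$, pick $f_k\in C^\infty(U_2)\cap W^{1,2}(U_2)$ with $f_k\to f$ in $W^{1,2}(U_2)$ (Meyers--Serrin, valid for arbitrary open $U_2$). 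By the norm equivalence, $f_k\circ\Phi\to f\circ\Phi$ and $(\nabla f_k)\circ\Phi\to(\nabla f)\circ\Phi$ in $L^2(U_1)$, and since $\nabla\Phi\in L^\infty(U_1,\RR^{n\times n})$ we obtain $\nabla(f_k\circ\Phi)=((\nabla f_k)\circ\Phi)\nabla\Phi\to((\nabla f)\circ\Phi)\nabla\Phi$ in $L^2(U_1)$. Thus $(f_k\circ\Phi)_k$ is Cauchy in $W^{1,2}(U_1)$ with limit $f\circ\Phi$, so $f\circ\Phi\in W^{1,2}(U_1)$ with weak gradient $((\nabla f)\circ\Phi)\nabla\Phi$, which is \eqref{cr}. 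The reverse implication follows by applying the same to the bilipschitz homeomorphism $\Phi^{-1}:U_2\to U_1$ and the function $f\circ\Phi$, using $(f\circ\Phi)\circ\Phi^{-1}=f$.

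The step I would be most careful about is the correct use of the \emph{two-sided} Lipschitz bound: it is exactly what forces $J_\Phi$ away from zero, which both makes the $L^p$ norms transfer in both directions — so the approximation argument closes and the equivalence $f\in W^{1,2}(U_2)\Leftrightarrow f\circ\Phi\in W^{1,2}(U_1)$ is symmetric — and makes $\nabla\Phi$ invertible a.e.; with $\Phi$ merely Lipschitz the assertion fails. The remaining points are routine bookkeeping with the Lusin (N) property, needed so that $(\nabla f)\circ\Phi$ is independent of the chosen representative of $\nabla f$.
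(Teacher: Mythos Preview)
Your argument is correct. The paper itself does not prove this lemma; it simply records it as a particular case of Theorem~2.2.2 in Ziemer's book \cite{ziemer}. Your route---Rademacher plus the Lusin~(N) property to control $J_\Phi$ two-sidedly, the area formula to transfer $L^2$ norms, the classical chain rule for $f\in C^\infty(U_2)\cap W^{1,2}(U_2)$, and Meyers--Serrin density to pass to general $f$---is exactly the standard proof that underlies the cited reference, so there is no meaningful methodological difference to report.

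One wording point worth tightening: for $f\in C^\infty(U_2)\cap W^{1,2}(U_2)$ the composition $f\circ\Phi$ is only \emph{locally} Lipschitz on $U_1$ (since $\nabla f$ need not be bounded on $U_2$), not globally Lipschitz as you wrote. Your conclusion $f\circ\Phi\in W^{1,\infty}_{\mathrm{loc}}(U_1)$ is the correct one and is all you use, and the subsequent $L^2$ bounds on $f\circ\Phi$ and its gradient come, as you note, from the norm equivalence rather than from any global Lipschitz bound---so the argument is unaffected.
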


\section{Equation $\mbox{sym}\nabla w = B$ on developable surfaces}\label{equation}

In this section we let $S$ be a developable surface of class $\mathcal{C}^{2,1}$.
By $\vec n : S\to\mathbb{R}^3$ we denote the unit normal to $S$ which satisfies
$|\vec n(x)|=1$ and $\vec n(u(x)) = \d_1 u(x)\times \d_2 u(x)$ for all $x\in\Omega$, and
$\vec n(\gamma(t)) = n(t)$ for all $t\in (0, T)$. By $\Pi = \nabla \vec n$ we denote
the second fundamental form of $S$, defined as a symmetric bilinear form by:
$$ \displaystyle \Pi (p) (\tau, \eta) = \eta\cdot \partial_\tau \vec n, \quad \forall \,\, 
\tau,\eta\in T_pS, \,\, p\in S, $$
where $T_pS$ is the tangent plane to $S$ at point $p$, so that:
$$ \Pi(u(x))(\d_i u(x), \d_j u(x)) =- a_{ij}(x)\mbox{ for all }x\in
\Omega. $$

We continue to assume \eqref{convex}. Hence:
\begin{equation}\label{bdd}
|\kappa_n(t)|>c>0 \qquad\forall t \in [0,T].
\end{equation}
For a given symmetric bilinear form
$B\in\mathcal{C}^{1,1}(S),\RR^{2\times 2}$, we want to solve a first order PDE:
\begin{equation}\label{prob}
\mbox{sym} \nabla w = B,
\end{equation}
on $S$, where $w:S\to \mathbb R^3$ is a displacement field and the expression sym$\nabla w$ in
the left-hand side is the following bilinear form acting on the tangent space of 
$S$:
\begin{equation*}
  \displaystyle \mbox{sym}\nabla w(p) (\tau, \eta)= \frac 12
  (\partial_\tau w(p) \cdot \eta + \partial_\eta w(p) \cdot \tau), \quad \forall \,\, 
\tau,\eta\in T_pS, \,\, p\in S.
\end{equation*}

\smallskip

{\bf 1.} We shall write $w\vec n$ to denote the scalar product
$w\cdot\vec n$, and we decompose $w$ as follows:
$$ w = w_{tan} + (w\vec n)\vec n. $$
Hence $w_{tan}(p)\in T_p S$ for all $p\in S$. 
We define the pulled back maps $w_3 = (w\vec n)\circ u$ and 
$w' = (w_{tan}\circ u)^T \nabla u$, 
as well as the pulled back form:
$$ B_{ij}(x) = B(u(x))(\partial_i u(x), \partial_j u(x)) \qquad
\forall x\in \Omega \quad \forall i,j = 1,2. $$
Using \eqref{uijAij} and recalling that $w_{tan}$ is tangent to $S$, we calculate:
\begin{equation*}
\begin{split}
\partial_j u(x)\cdot \nabla w (u(x))\partial_i u(x)
& = \partial_i (w(u(x)))\cdot \partial_j u(x) 
= \partial_i \Big(w(u(x)) \cdot \partial_j u(x)\Big) -
w(u(x))\cdot \partial^2_{ij}u(x) \\ &
= \partial_i w'_j - w_3 a_{ij} (x).
\end{split}
\end{equation*}
Hence \eqref{prob} can be
written in terms of the pulled back quantities as
the following matrix equality:
\begin{equation}\label{prob1}
\begin{split}
[B_{ij}] = & \mbox{ sym} \nabla w' - w_3[a_{ij}],
\end{split}
\end{equation}
where now sym $\nabla w'$ is understood in the usual way, with respect to the standard 
Euclidean coordinates in $\Omega$.

\smallskip

{\bf 2.}   Recalling that the condition for a matrix field
$\tilde B$ to be of the form $\tilde B = \mathrm{sym} \nabla \tilde w$
for some vector field $\tilde{w}$
on $\Omega$ is equivalent to $\mbox{curl}^T\mbox{curl} \tilde B = 0$
(in view of $\Omega$ being simply connected), the equation (\ref{prob1})
becomes:
\begin{equation}\label{prob2}
\begin{split}
\mbox{curl}^T&\mbox{curl } [B_{ij}] = - \mbox{curl}^T\mbox{curl}
\Big(w_3 [a_{ij}]\Big) \\ & = -w_3 \mbox{ curl}^T\mbox{curl } [a_{ij}]
- 2 \nabla^\perp w_3\cdot \mbox{curl } [a_{ij}]
- \mbox{cof } \nabla^2w_3 : [a_{ij}].
\end{split}
\end{equation}
Notice now that:
\begin{equation*}
\begin{split}
\mbox{curl}~ [a_{ij}] = -
\mbox{curl}\left[\begin{array}{cc}\partial_{11}^2u\cdot n
    & \partial_{12}^2u\cdot n \\
\partial_{12}^2u\cdot n & \partial_{22}^2u\cdot n\end{array}\right] =
- \left[\begin{array}{c}\partial_{11}^2 u\cdot \partial_2 n
   - \partial_{12}^2u\cdot  \partial_1 n \\
\partial_{12}^2u\cdot  \partial_2 n
- \partial_{22}^2u\cdot  \partial_1 n\end{array}\right]  =0,
\end{split}
\end{equation*}
because $ \partial_i(\vec n\circ u)\in T_pS$ and
$ \partial^2_{ij}u\cdot\partial_k u=0$ by Lemma \ref{lem1}.
Hence:
\begin{equation}
\label{defalpha}
\theta = \mbox{curl}^T\mbox{curl } [B_{ij}]
\end{equation}
belongs to $L^\infty$. The problem (\ref{prob2}) becomes:
\begin{equation}\label{prob3}
\theta = -\mbox{cof } \nabla^2w_3 : [a_{ij}]
\qquad \mbox{ in } \Omega.
\end{equation}

\smallskip

{\bf 3.} By Lemma \ref{lem1} we have:
\begin{equation*}
\left(\mbox{cof } (\nabla^2 w_3) : [a_{ij}]\right)(\Phi(s,t)) =
\frac{\kappa_n(t)}{1-s\kappa(t)} \partial^2_{ss} (w_3(\Phi(s, t))).
\end{equation*}
Consequently, problem (\ref{prob3}) is equivalent to:
\begin{equation}\label{prob4}
\partial^2_{ss}(w_3(\Phi(s,t))) = - \frac{1-s\kappa(t)}{\kappa_n(t)}
\theta(\Phi(s,t)) \qquad \mbox{ for all } (s,t)\in M_{s^{\pm}}.
\end{equation}

The above calculations show that in order to solve (\ref{prob}), it
is sufficient and necessary to solve  the ODE (\ref{prob4}) for $w_3$
and then recover $w'$ from (\ref{prob1}). Moreover, the solution $(w', w_3)$
is unique after choosing the boundary conditions $w_3(\Phi(t,0))$
and $\partial_s (w_3(\Phi(t,0))$, where uniqueness of $w'$ is understood
up to affine (linearized) rotations of the
form $A(s,t) +b$, $A\in so(2)$, $b\in \mathbb{R}^2$.

\medskip

\begin{theorem}\label{prop1}
Assume that $S$ is developable of class $\mathcal{C}^{2,1}$ and
satisfies \eqref{convex}, and let $\alpha\in (0, 1)$. Then there
exists a constant $C$ such that the following is true.
For every symmetric bilinear form
$B\in\mathcal{C}^{1,1}(S,\RR^{2\times 2})$ 
there exists a solution $w = w_{tan} + (w\vec n)\vec n$ with $w_{tan}\in \mathcal{C}^{0,\alpha}$ and
$(w\vec n)\in L^\infty$ of:
\begin{equation}
\label{bd0}
\mathrm{sym} \nabla w = \mathrm{sym}\nabla w_{tan} + (w\vec n) \Pi = B
\end{equation}
satisfying the bounds:
\begin{equation}\label{bd1}
\|w_{tan}\|_{\mathcal{C}^{0,\alpha}} + \|(w\vec n)\|_\infty \le
C\|B\|_{\mathcal{C}^{1,1}}.
\end{equation}
If, in addition, $S\in\mathcal{C}^{k+2,1}$ and $B\in \mathcal{C}^{k+1, 1}$ for some $k\geq 1$, then
\begin{equation}\label{bd2}
\|w_{tan}\|_{\mathcal{C}^{k,1}} + \|(w\vec n)\|_{\mathcal{C}^{k-1,1}} \le C \|B\|_{\mathcal{C}^{k+1,1}}.
\end{equation}
\end{theorem}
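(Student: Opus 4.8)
The plan is to make the reduction carried out in parts \textbf{1}--\textbf{3} above quantitative and to construct the solution operator $B\mapsto w$ explicitly, working in the flat line-of-curvature chart $M_{s^{\pm}}$ and then pushing forward by $u$. First I would determine the normal component $(w\vec n)$ by integrating the ordinary differential equation \eqref{prob4}; then recover the tangential component $w_{tan}$ by solving the now-compatible linearized metric equation $\mathrm{sym}\nabla w' = [B_{ij}]+w_3[a_{ij}]$ on the simply connected domain $\Omega$; and finally transfer the estimates back to $S$. Linearity of each step yields the constant $C$ independently of $B$.

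\emph{The normal component.} With $B_{ij}(x)=B(u(x))(\partial_i u(x),\partial_j u(x))$, the hypotheses $u\in\mathcal{C}^{2,1}(\overline\Omega)$ and $B\in\mathcal{C}^{1,1}(S)$ give $[B_{ij}]\in\mathcal{C}^{1,1}(\overline\Omega)=W^{2,\infty}(\Omega)$ with $\|[B_{ij}]\|_{\mathcal{C}^{1,1}}\le C\|B\|_{\mathcal{C}^{1,1}}$, hence $\theta:=\mathrm{curl}^T\mathrm{curl}[B_{ij}]\in L^\infty(\Omega)$ and $\|\theta\|_\infty\le C\|B\|_{\mathcal{C}^{1,1}}$, as already observed after \eqref{defalpha}. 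Using $\kappa,\kappa_n\in\mathcal{C}^{0,1}$, $|\kappa_n|\ge c>0$ (Lemma \ref{lemregular} and \eqref{bdd}) and the bilipschitz homeomorphism $\Phi$, I set
\begin{equation*}
\tilde w_3(s,t):=-\int_0^s\int_0^\sigma\frac{1-\rho\kappa(t)}{\kappa_n(t)}\,\theta(\Phi(\rho,t))\,\mathrm{d}\rho\,\mathrm{d}\sigma,\qquad w_3:=\tilde w_3\circ\Phi^{-1},
\end{equation*}
so that $\tilde w_3$ solves \eqref{prob4} with zero Cauchy data on $\{s=0\}$ and $w_3\in L^\infty(\Omega)$ with $\|w_3\|_\infty\le C\|B\|_{\mathcal{C}^{1,1}}$. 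Retracing the equivalences \eqref{prob2}$\Leftrightarrow$\eqref{prob3}$\Leftrightarrow$\eqref{prob4} — using $\mathrm{curl}[a_{ij}]=0$ and the fact that the rank-one structure \eqref{A} forces $\mathrm{cof}\,\nabla^2 w_3:[a_{ij}]$ to involve only $\partial^2_{ss}\tilde w_3$ — one concludes $\mathrm{curl}^T\mathrm{curl}\big([B_{ij}]+w_3[a_{ij}]\big)=0$ in $\mathcal{D}'(\Omega)$.

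\emph{The tangential component and assembly.} Since $\Omega$ is a bounded simply connected Lipschitz domain and $F:=[B_{ij}]+w_3[a_{ij}]\in L^\infty(\Omega,\RR^{2\times2})$ is symmetric with $\mathrm{curl}^T\mathrm{curl}\,F=0$, the remark in part \textbf{2} furnishes $w'$ with $\mathrm{sym}\nabla w'=F$; normalizing the mean and the mean skew-gradient of $w'$ and applying Korn's and Poincaré's inequalities gives $w'\in W^{1,p}(\Omega)$ for every $p<\infty$ with $\|w'\|_{W^{1,p}}\le C_p\|F\|_{L^p}\le C_p\|B\|_{\mathcal{C}^{1,1}}$, and hence $w'\in\mathcal{C}^{0,\alpha}(\overline\Omega)$ with $\|w'\|_{\mathcal{C}^{0,\alpha}}\le C\|B\|_{\mathcal{C}^{1,1}}$ by Sobolev embedding with $p$ large. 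Defining $w$ on $S$ by $(w\vec n)\circ u:=w_3$ and $w_{tan}\circ u:=(\nabla u)(w')^T$ (a tangent field, the columns of $\nabla u$ being orthonormal, and the correct inverse of $w'=(w_{tan}\circ u)^T\nabla u$), the computation preceding \eqref{prob1} reads precisely $\mathrm{sym}\nabla w=\mathrm{sym}\nabla w_{tan}+(w\vec n)\Pi=B$, i.e. \eqref{bd0}, and since $u$ is bilipschitz with $\nabla u\in\mathcal{C}^{1,1}$ the above estimates transfer to \eqref{bd1}. For the higher-order statement, if $S\in\mathcal{C}^{k+2,1}$ and $B\in\mathcal{C}^{k+1,1}$ with $k\ge1$ then $\kappa,\kappa_n\in\mathcal{C}^{k,1}$ and $\Phi,\Phi^{-1}\in\mathcal{C}^{k+1,1}$ by Lemma \ref{lemregular}, while $[B_{ij}]\in\mathcal{C}^{k+1,1}$ and $[a_{ij}]\in\mathcal{C}^{k,1}$, so $\theta\in\mathcal{C}^{k-1,1}$, whence $\tilde w_3,w_3\in\mathcal{C}^{k-1,1}$ and $F\in\mathcal{C}^{k-1,1}$ with norms $\le C\|B\|_{\mathcal{C}^{k+1,1}}$; writing $\nabla w'=F+W$ with $W$ the skew field whose $(1,2)$-entry is $-q$ and integrating $\partial_1 q=\partial_1F_{12}-\partial_2F_{11}$, $\partial_2 q=\partial_1F_{22}-\partial_2F_{12}$ (consistent exactly because $\mathrm{curl}^T\mathrm{curl}\,F=0$) yields $q\in\mathcal{C}^{k-1,1}$, hence $w'\in\mathcal{C}^{k,1}$, and transferring to $S$ gives \eqref{bd2}.

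The hard part will be the first stage at the claimed low regularity: the chain \eqref{prob2}$\Leftrightarrow$\eqref{prob3}$\Leftrightarrow$\eqref{prob4} was obtained by formal manipulations, and with $\theta$ merely in $L^\infty$ one must justify rigorously that $w_3=\tilde w_3\circ\Phi^{-1}$ — which has $\partial^2_{ss}\tilde w_3\in L^\infty$ but poor regularity in the $t$ direction — genuinely satisfies $\mathrm{curl}^T\mathrm{curl}(w_3[a_{ij}])=\theta$ in $\mathcal{D}'(\Omega)$. This is where the developable geometry enters essentially: $\Phi$ carries the lines $\{t=\mathrm{const}\}$ onto the rulings of $S$ and \eqref{A} makes $[a_{ij}]$ rank one along them, so that the contraction with $\mathrm{cof}\,\nabla^2 w_3$ reduces to a single honest second derivative. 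I expect this to be handled by mollifying $\theta$, solving, and passing to the limit in \eqref{bd1}. A secondary point is the $\mathcal{C}^{k,1}$ (Schauder-type) solvability of $\mathrm{sym}\nabla w'=F$ with bounds linear in $\|F\|_{\mathcal{C}^{k-1,1}}$, which the skew-part computation above reduces to ordinary integration.
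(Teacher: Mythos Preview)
Your proposal is correct and takes essentially the same approach as the paper: solve \eqref{prob4} for $(w\vec n)$ by double integration in $s$ with zero Cauchy data, recover $w_{tan}$ from \eqref{prob1} via Korn's and Poincar\'e's inequalities followed by Sobolev embedding into $\mathcal{C}^{0,\alpha}$, and in the higher-regularity case use Lemma \ref{lemregular} together with the fact that $\nabla^2 w'$ is a linear combination of first derivatives of $\mathrm{sym}\nabla w'$ (which is precisely your skew-part integration for $q$). Your final paragraph's concern about the low-regularity justification of the chain \eqref{prob2}$\Leftrightarrow$\eqref{prob4} is a point the paper treats implicitly via the formal reductions preceding the theorem; your mollification idea is a sound way to make it rigorous, though it is not needed in the paper's presentation.
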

\begin{proof}
{\bf 1.} Assume first the minimal regularity $u\in\mathcal{\mathcal{C}}^{2,1}$ and
$B\in\mathcal{C}^{1,1}$, so that $\theta=\mbox{curl}^T\mbox{curl}
[B_{ij}]\in L^\infty(\Omega,\mathbb{R})$. Solving (\ref{prob4}) by integrating twice
in $s$ from  $w_3(\Phi(t,0)) = 0$ and $\partial_s w_3(\Phi(t,0))=0$, we obtain:
\begin{equation}\label{a}
\|(w\vec n)\|_{L^\infty} \le C \|\theta \|_{L^\infty}  \le C \|B\|_{\mathcal{C}^{1,1}}.
\end{equation}
Solving now (\ref{prob1}) for $w_{tan}$ so that:
$$\mbox{skew}\fint_\Omega\nabla (w_{tan}\circ u) = 0 \quad \mbox{ and} \quad
\fint_\Omega (w_{tan}\circ u) = 0,$$
(where for a square matrix $P$, its skew-symmetric part is denoted
$\mbox{skew } P = \frac{1}{2} (P + P^T)$), 
we obtain by means of Korn's inequality, for any $p > 1$:
\begin{equation*}
\|\nabla w_{tan}\|_{L^p}\leq C\|\mbox{sym} \nabla (w_{tan}\circ
u)\|_{L^p}  \le C (\|[B_{ij}]\|_{L^\infty} + \|(w\vec n)\|_{L^\infty}) \le C \|B\|_{\mathcal{C}^{1,1}},
\end{equation*}
where $C$ may depend on $p$. Combining with the Poincar\'e
inequality, we get:
\begin{equation}\label{b}
\|w_{tan}\|_{W^{1,p}} \le C \|B\|_{\mathcal{C}^{1,1}}.
\end{equation}
By Sobolev embedding, (\ref{bd1}) follows now from (\ref{a}) and
(\ref{b}).

\smallskip

{\bf 2.} When $B\in\mathcal{C}^{k+1,1}$ and
$u\in\mathcal{C}^{k+2,1}$, then
$-\frac{1-s\kappa}{\kappa_n}\theta\in \mathcal{C}^{k-1,1}$ by Lemma
\ref{lemregular}, and so by (\ref{prob4}):
\begin{equation}\label{c}
\|(w\vec n)\|_{\mathcal{C}^{k-1,1}} \le C
\|\theta\|_{\mathcal{C}^{k-1,1}} \leq C \|B\|_{\mathcal{C}^{k+1,1}},
\end{equation}
where $C$ may depend on $S$. Recalling that $\nabla^2 w'$ can be
expressed as the linear combination of partial derivatives of
$\mbox{sym}\nabla w'$, we have from \eqref{bd0}, \eqref{c} that:
$$ \|\nabla^2w_{tan}\|_{\mathcal{C}^{k-2,1}} \leq C \|B\|_{\mathcal{C}^{k+1,1}},$$
which implies (\ref{bd2}) in view of (\ref{bd1}) and (\ref{c}).
\end{proof}

\begin{proposition}\label{prop2}
Assume that $S$ is developable of class $\mathcal{C}^{k+2,1}$,
satisfying (\ref{convex}). Also assume that $B = \mathrm{sym}
((\nabla \phi)^T (\nabla \psi))$ where $\phi, \psi \in
\mathcal{C}^{k+1,1}(S, {\mathbb R}^3)$. Then $w$ as obtained in
Theorem \ref{prop1} satisfies:
$$ \|w_{tan}\|_{\mathcal{C}^{k,1}} + \|w\vec n\|_{\mathcal{C}^{k-1,1}}
\le C \|\psi\|_{\mathcal{C}^{k+1,1}} \|\phi\|_{\mathcal{C}^{k+1,1}}.$$
\end{proposition}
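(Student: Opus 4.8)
The plan is to reduce Proposition~\ref{prop2} to the already-established quantitative bound \eqref{bd2} of Theorem~\ref{prop1}, the only extra content being that when $B$ has the bilinear structure $B=\mathrm{sym}((\nabla\phi)^T\nabla\psi)$ one can control $\|B\|_{\mathcal C^{k+1,1}}$ by the \emph{product} $\|\phi\|_{\mathcal C^{k+1,1}}\|\psi\|_{\mathcal C^{k+1,1}}$ rather than by a single factor. So the first step is to observe that $\nabla\phi$ and $\nabla\psi$ are (by hypothesis) of class $\mathcal C^{k,1}$ on $S$, hence bounded in $\mathcal C^{k,1}$ by $\|\phi\|_{\mathcal C^{k+1,1}}$ and $\|\psi\|_{\mathcal C^{k+1,1}}$ respectively; since $\mathcal C^{k,1}(S)$ is a Banach algebra (the surface $S$ being a fixed compact Lipschitz manifold with a fixed bilipschitz chart $\Phi$, so that products and the chart change behave as on a bounded Euclidean domain), the entrywise products making up $(\nabla\phi)^T\nabla\psi$, and their symmetrization, lie in $\mathcal C^{k,1}$ with
\begin{equation*}
\|B\|_{\mathcal C^{k,1}}\le C\,\|\nabla\phi\|_{\mathcal C^{k,1}}\|\nabla\psi\|_{\mathcal C^{k,1}}\le C\,\|\phi\|_{\mathcal C^{k+1,1}}\|\psi\|_{\mathcal C^{k+1,1}}.
\end{equation*}

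The apparent mismatch is that Theorem~\ref{prop1} wants $B\in\mathcal C^{k+1,1}$, i.e.\ one more derivative than the product bound above naturally gives. The second step is therefore to re-examine the proof of Theorem~\ref{prop1} and note where the full $\mathcal C^{k+1,1}$ norm of $B$ was actually used: it enters only through $\theta=\mathrm{curl}^T\mathrm{curl}\,[B_{ij}]$, which costs two derivatives of $B$, and then \eqref{prob4} recovers $w_3$ by two integrations in $s$, so $\|w\vec n\|_{\mathcal C^{k-1,1}}\le C\|\theta\|_{\mathcal C^{k-1,1}}\le C\|B\|_{\mathcal C^{k+1,1}}$; finally $w_{tan}$ is recovered from \eqref{prob1} at the same regularity level. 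To keep the product structure we instead differentiate \emph{before} taking curls: writing $B_{ij}$ in terms of $\partial\phi,\partial\psi$ and using the Leibniz rule, each second derivative of $B_{ij}$ is a sum of terms each of which is a product of a factor involving at most two derivatives of $\phi$ by a factor involving at most two derivatives of $\psi$; hence $\theta\in\mathcal C^{k-1,1}$ with $\|\theta\|_{\mathcal C^{k-1,1}}\le C\|\phi\|_{\mathcal C^{k+1,1}}\|\psi\|_{\mathcal C^{k+1,1}}$, because no single factor ever needs more than $k+1$ genuine derivatives and the algebra estimate again distributes the remaining derivatives multiplicatively across the two factors. Pulling this back through the bilipschitz chart $\Phi$ (of class $\mathcal C^{k+1,1}$ by Lemma~\ref{lemregular}) and solving \eqref{prob4} as in Theorem~\ref{prop1} yields $\|w\vec n\|_{\mathcal C^{k-1,1}}\le C\|\phi\|_{\mathcal C^{k+1,1}}\|\psi\|_{\mathcal C^{k+1,1}}$, and then \eqref{prob1} together with the fact that $\nabla^2 w'$ is a linear combination of first derivatives of $\mathrm{sym}\nabla w'=[B_{ij}]+w_3[a_{ij}]$ gives the stated bound on $\|w_{tan}\|_{\mathcal C^{k,1}}$.

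The main obstacle, and the only place requiring genuine care, is this bookkeeping of derivative counts in the Leibniz expansion of $\mathrm{curl}^T\mathrm{curl}\,[B_{ij}]$ — one must check that the "worst" term (all two of the curl-derivatives, plus the $k-1$ Hölder derivatives, landing on the same factor) still only asks for $k+1$ derivatives of that factor, which is exactly the regularity assumed; simultaneously one needs the curvature coefficients $\kappa,\kappa_n$ and the chart $\Phi,\Phi^{-1}$ to carry enough regularity, which is supplied by Lemma~\ref{lemregular} under hypothesis \eqref{convex}. A cosmetic subtlety is that $B$ is posed intrinsically on $S$ as a bilinear form on tangent spaces, so "$\nabla\phi$'' should be read as the ambient gradient restricted to $TS$ (equivalently the pulled-back objects $\phi\circ u$, $\psi\circ u$ and their Euclidean derivatives on $\Omega$), exactly as in part {\bf 1} of Section~\ref{equation}; once everything is pulled back to $\Omega$ the estimate is the standard Banach-algebra product estimate on a bounded Lipschitz domain and no further difficulty arises.
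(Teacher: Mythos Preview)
Your overall strategy is right: trace where the $\mathcal C^{k+1,1}$ norm of $B$ actually enters the proof of Theorem~\ref{prop1} --- namely only through $\theta=\mathrm{curl}^T\mathrm{curl}\,[B_{ij}]$ --- and bound $\|\theta\|_{\mathcal C^{k-1,1}}$ directly by the product $\|\phi\|_{\mathcal C^{k+1,1}}\|\psi\|_{\mathcal C^{k+1,1}}$. But your derivative count is off by one, and the missing ingredient is not cosmetic.

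You claim that ``each second derivative of $B_{ij}$ is a sum of terms each of which is a product of a factor involving at most two derivatives of $\phi$ by a factor involving at most two derivatives of $\psi$.'' This is false. Since $B_{ij}=\tfrac12\big(\partial_i(\phi\circ u)\cdot\partial_j(\psi\circ u)+\partial_j(\phi\circ u)\cdot\partial_i(\psi\circ u)\big)$ already carries one derivative of each, a generic second derivative $\partial_k\partial_l B_{ij}$ produces terms such as $\partial^3_{kli}(\phi\circ u)\cdot\partial_j(\psi\circ u)$ with \emph{three} derivatives on $\phi$. Placing such a term in $\mathcal C^{k-1,1}$ would require $\phi\in\mathcal C^{k+2,1}$, one order more than is assumed. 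Your ``worst case'' bookkeeping therefore does not close.

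What you are missing --- and this is the entire content of the paper's argument --- is that in the \emph{specific} combination $\theta=\partial^2_{11}B_{22}+\partial^2_{22}B_{11}-2\,\partial^2_{12}B_{12}$ all third-order terms cancel, leaving
\[
\theta=-\Big(\partial^2_{11}(\phi\circ u)\cdot\partial^2_{22}(\psi\circ u)+\partial^2_{22}(\phi\circ u)\cdot\partial^2_{11}(\psi\circ u)-2\,\partial^2_{12}(\phi\circ u)\cdot\partial^2_{12}(\psi\circ u)\Big),
\]
which involves only second derivatives of $\phi\circ u$ and $\psi\circ u$. (This is the bilinear cofactor/Monge--Amp\`ere identity underlying $\mathrm{curl}^T\mathrm{curl}$.) Once this cancellation is inserted, the rest of your argument --- solving \eqref{prob4} for $w\vec n$ and then recovering $w_{tan}$ from \eqref{prob1} exactly as in Theorem~\ref{prop1} --- goes through verbatim; without it, the Leibniz bound alone is insufficient.
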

\begin{proof}
After straightforward calculations, we obtain:
$$ B_{ij} = \frac 12 \big(\partial_i (\phi\circ u) \cdot \partial_j
(\psi\circ u) + \partial_j(\phi\circ u) \cdot \partial_i (\psi\circ u)\big). $$
Further calculations shows that in the expansion of $\theta$, as defined in \eqref{defalpha},
the third derivatives of $\psi$ and $\phi$ cancel out:
\begin{equation*}
\begin{split}
\theta &= \mbox{curl}^T\mbox{curl } [B_{ij}]
= \partial^2_{11} B_{22} + \partial^2_{22} B_{11} - 2\partial^2_{12} B_{12} \\
& =  \partial^2_{22} \big(\partial_1 (\phi\circ u) \cdot \partial_1 (\psi\circ u)\big)
+ \partial^2 _{11} \big(\partial_2 (\phi\circ u) \cdot \partial_2(
\psi\circ u)\big)  \\
&\qquad \qquad -  \partial^2_{12} \big(\partial_1(\phi\circ u) \cdot \partial_2(\psi\circ u)
+ \partial_2 (\phi\circ u)\cdot \partial_1 (\psi\circ u)\big)\\
& = - \big(\partial^2_{11}(\phi\circ u)\cdot\partial^2_{22}(\psi\circ u)
+ \partial^2_{22}(\phi\circ u) \cdot\partial^2_{11}(\psi\circ u)
- 2 \partial^2_{12}(\phi\circ u)\cdot\partial^2_{12}(\psi\circ u)\big).
\end{split}
\end{equation*}
As a consequence, if $S$ is of class $\mathcal{C}^{k+2,1}$, the solution of (\ref{prob4}) from Theorem
\ref{prop1} satisfies:
$$ \|w\vec n\|_{\mathcal{C}^{k-1,1}} \le C \|\theta\|_{\mathcal{C}^{k-1,1}} \le C
\|\phi\|_{\mathcal{C}^{k+1,1}} \|\psi\|_{\mathcal{C}^{k+1,1}}. $$
Reasoning as in the proof of Theorem \ref{prop1}, we obtain:
$$ \|w_{tan}\|_{\mathcal{C}^{k,1}} + \|w\vec n\|_{\mathcal{C}^{k-1,1}} \le C \|B\|_{\mathcal{C}^{k-1,1}}
+  C \|\phi\|_{\mathcal{C}^{k+1,1}} \|\psi\|_{\mathcal{C}^{k+1,1}}
\le C  \|\phi\|_{\mathcal{C}^{k+1,1}} \|\psi\|_{\mathcal{C}^{k+1,1}},$$
proving the claim.
\end{proof}

\section{Spaces of $W^{2,2}$ infinitesimal isometries on developable surfaces}\label{space}

In this section, we establish some properties of $W^{2,2}$ first order infinitesimal
isometries on developable surfaces $S$ of $\mathcal{C}^{2,1}$ regularity.
We give a classification of these displacements and prove
that they are necessarily $\mathcal{C}^{1,1/2}$ regular.
Define:
\begin{equation*}
\mathcal{V} = \big\{V\in W^{2,2} (S, {\mathbb R}^3); ~\mbox{sym} \nabla V =0\big\}
\end{equation*}
Note that in view of Lemma \ref{chainrule}, we may freely determine the regularity of any mapping 
on $S$, up to $\mathcal{C}^{2,1}$ regularity, by considering the regularity of its composition with the chart $u$. 
Here and in what follows we write $f\in W^{2,2}(S)$ precisely if
$f\circ u\in W^{2,2}(\Omega)$.  

The following is the main result of this section:

\begin{theorem}\label{prop1-1storder}
Let $V\in\mathcal{V}$ and assume that $S$ is developable of class $\mathcal{C}^{2,1}$.
Then $V\in \mathcal{C}^{1,1/2}(S,\mathbb{R}^3)$.  More precisely,
writing  $V= V_{tan} + (V\vec n) \vec n$ we have:
$$ V_{tan} \in \mathcal{C}^{2,1/2}(S, \RR^3) \quad
\mbox{ and } \quad V\vec n\in \mathcal{C}^{1,1/2} (S). $$
Moreover, $V\in \mathcal V$ if and only if there exist $a, b\in W^{2,2}((0, T),\mathbb{R})$ such that
\begin{align}
\label{V3} (V\vec n) (u(\Phi(s, t)))  &= a(t) + sb(t),
\\
\label{e} \mathrm{sym} \nabla V_{tan} (u(\Phi(s, t))) &= \frac{a(t) + sb(t)}{1 - s\kappa(t)}\kappa_n(t)\
(\Gamma'(t)\otimes\Gamma'(t)) \qquad \mbox{ for a.e. } (s, t)\in M_{s^{\pm}},
\end{align}
and such that the following integrals are finite:
\begin{align}
\label{est1}
J_1(a,b) &= \int_{M_{s^{\pm}}}\left( b'(t) + \frac{\kappa(a'(t) + sb'(t))}{1 - s\kappa(t)}
\right)^2 \frac{\mathrm{d}s\mathrm{d}t}{1 - s\kappa(t)} < \infty,
\\
\label{est2}
J_2(a,b) &= \int_{M_{s^{\pm}}} \left(
a''(t) + sb''(t) - \kappa(t) (1-s\kappa(t)) b(t) + \frac{s\kappa'(t)(a'(t) + sb'(t))}{1 - s\kappa(t)}
\right)^2  \frac{\mathrm{d}s\mathrm{d}t}{(1 - s\kappa(t))^3} <\infty.
\end{align}
\end{theorem}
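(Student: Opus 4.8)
The plan is to recognise $\mathcal V$ as the solution set of $\mathrm{sym}\nabla w=B$ with $B\equiv 0$, read off \eqref{V3} and \eqref{e} from the reduction carried out in Section~\ref{equation}, and then translate the requirement $V\in W^{2,2}(S)$ into \eqref{est1}--\eqref{est2} by computing $\nabla^2(V\circ u)$ explicitly in the line-of-curvature chart.

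Fix $V\in\mathcal V$. Then $\mathrm{sym}\nabla V=0$ is exactly \eqref{prob} with $[B_{ij}]\equiv 0$, so the pull-back computations \eqref{prob1}--\eqref{prob4} apply verbatim with $\theta=\mathrm{curl}^T\mathrm{curl}[B_{ij}]=0$; here only $V\circ u\in W^{2,2}(\Omega)$ and $u\in\mathcal C^{2,1}$ are needed, since the $\mathrm{curl}^T\mathrm{curl}$ and chain-rule identities hold distributionally, hence as $L^2$-identities. Consequently \eqref{prob4} degenerates to $\partial^2_{ss}\bigl(V_3\circ\Phi\bigr)=0$ on $M_{s^{\pm}}$, where $V_3=(V\vec n)\circ u$; integrating twice in $s$ gives $V_3(\Phi(s,t))=a(t)+sb(t)$, which is \eqref{V3}. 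Restricting the $W^{2,2}$ function $V_3\circ\Phi$ to a fixed strip $[-\varepsilon,\varepsilon]\times(0,T)\subset M_{s^{\pm}}$ (possible since $s^{\pm}$ are bounded below on $[0,T]$) and integrating in $s$ against $1$ and against $s$ — which preserves $W^{2,2}$-regularity in $t$ by Fubini and Minkowski's inequality — gives $a,b\in W^{2,2}((0,T))$. Identity \eqref{e} then follows at once from $\mathrm{sym}\nabla V=\mathrm{sym}\nabla V_{tan}+(V\vec n)\Pi=0$, evaluated on the frame $(\partial_i u,\partial_j u)$, using $\Pi(\partial_i u,\partial_j u)=-a_{ij}$ together with \eqref{A} and \eqref{V3}. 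Conversely, given $a,b\in W^{2,2}((0,T))$, define $V_3$ by \eqref{V3}; the compatibility condition needed to solve \eqref{prob1} for the tangential part is $\mathrm{curl}^T\mathrm{curl}(V_3[a_{ij}])=0$, which holds precisely because $a+sb$ is affine in $s$, so \eqref{prob1} produces some $V=V_{tan}+(V\vec n)\vec n$ with $\mathrm{sym}\nabla V=0$, unique up to an infinitesimal rigid motion.

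It remains to show $V\in W^{2,2}(S)\iff J_1(a,b)+J_2(a,b)<\infty$. Here I would work entirely in the chart: write $W=(V\circ u)\circ\Phi$ and expand it in the Darboux frame, $W=\xi_1\gamma'+\xi_2 v+\xi_3 n$ with $\xi_3=a+sb$. Using \eqref{darboux} and $\partial_s\Phi=N$, $\partial_t\Phi=(1-s\kappa)\Gamma'$, the equation $\mathrm{sym}\nabla V=0$ forces $\xi_2=\xi_2(t)$ and a first-order linear ODE in $s$ for $\xi_1$ whose explicit solution is again affine in $s$; thus $W$ is affine along each ruling — the expected developable structure of infinitesimal isometries — which is why the $\partial^2_{ss}$-contribution drops out. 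Differentiating $W$ twice and changing variables $x=\Phi(s,t)$ with $\mathrm dx=(1-s\kappa)\,\mathrm ds\,\mathrm dt$ (see \eqref{det}), one expresses $\int_\Omega|\nabla^2(V\circ u)|^2\,\mathrm dx$ as a sum of weighted squared $L^2$-integrals; the two that need not be automatically finite — coming from the mixed $\partial_s\partial_t$-derivative and from $\partial_t^2$ — are, after the substitution, precisely $J_1(a,b)$ and $J_2(a,b)$, while the remaining terms are bounded using $a,b\in W^{2,2}$ and the integrability already present in Lemma~\ref{lem1}. Running this computation backwards proves the converse and completes the equivalence.

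Finally, $W^{2,2}((0,T))\hookrightarrow\mathcal C^{1,1/2}([0,T])$ by one-dimensional Sobolev embedding, so $\xi_3=a+sb\in\mathcal C^{1,1/2}(\overline{M}_{s^{\pm}})$; by Lemma~\ref{lemregular} the chart $\Phi^{-1}$ is regular enough that this transfers to give $V\vec n\in\mathcal C^{1,1/2}(S)$, and recovering $V_{tan}$ from the first-order equation \eqref{prob1} yields the additional regularity $V_{tan}\in\mathcal C^{2,1/2}(S)$. I expect the real obstacle to be the third step: organising the frame computation of $\nabla^2(V\circ u)$ so that, after the Jacobian change of variables, the surviving integrands collapse exactly to $J_1$ and $J_2$ — in particular solving the ODE governing $\xi_1$ along rulings and verifying that the cross-terms with the other frame components are harmless. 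The first two steps are essentially a specialisation of Section~\ref{equation} together with standard one-dimensional Sobolev theory.
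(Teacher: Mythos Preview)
Your derivation of \eqref{V3}, \eqref{e} and of the $\mathcal C^{1,1/2}$/$\mathcal C^{2,1/2}$ regularity follows the paper's route: specialise Section~\ref{equation} to $B=0$, integrate $\partial_{ss}^2(V_3\circ\Phi)=0$, extract $a,b\in W^{2,2}(0,T)$ from the affine structure on a strip, and use the one-dimensional embedding $W^{2,2}\hookrightarrow\mathcal C^{1,1/2}$. One correction: you cite Lemma~\ref{lemregular} for the chart regularity, but that lemma assumes \eqref{convex}, which Theorem~\ref{prop1-1storder} does not. The paper instead works on the truncated sets $M'_\delta=\{(s,t)\in M_{s^{\pm}}:\delta-s^-(t)<s<s^+(t)-\delta\}$, on which $\Phi$ is bilipschitz (indeed $\mathcal C^{1,1}$, by the same bootstrap as in Lemma~\ref{lemregular}); the regularity statements are then interior to $S$, which is all the theorem asserts.

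For the equivalence with $J_1,J_2$ your plan is correct in principle but heavier than the paper's, and the extra weight is exactly the ``obstacle'' you anticipate. You propose to expand the full $W=(V\circ u)\circ\Phi$ in the Darboux frame, solve an ODE along rulings for $\xi_1,\xi_2$, and then compute $\int_\Omega|\nabla^2(V\circ u)|^2$. The paper sidesteps the tangential coefficients entirely by observing that they are slaved to the normal one: once $V_3\in W^{2,2}(\Omega)$, equation \eqref{prob1} with $[B_{ij}]=0$ gives $\mathrm{sym}\,\nabla w'=V_3[a_{ij}]$, and since $[a_{ij}]\in\mathcal C^{0,1}$ one gets $\nabla^2 w'$ (a linear combination of first derivatives of $\mathrm{sym}\,\nabla w'$) in $W^{1,2}$, hence $V_{tan}\in W^{3,2}$ automatically. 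So it suffices to characterise $V_3\in W^{2,2}(\Omega)$. This is done by a direct chain-rule computation: from $f(s,t)=a(t)+sb(t)$ and the formulas $\partial_s f=\nabla V_3\cdot N$, $\partial_t f=(1-s\kappa)\nabla V_3\cdot\Gamma'$ one differentiates once more and solves the resulting $3\times3$ linear system for $(\nabla^2V_3\,\Gamma')\cdot\Gamma'$, $(\nabla^2V_3\,N)\cdot\Gamma'$, $(\nabla^2V_3\,N)\cdot N$ in terms of $a,b,a',b',a'',b'',\kappa,\kappa'$. The $NN$-entry vanishes, and after the change of variables with Jacobian $1-s\kappa$ the squared $N\Gamma'$- and $\Gamma'\Gamma'$-entries are \emph{exactly} the integrands of $J_1$ and $J_2$. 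No frame expansion of $V_{tan}$, no ODE for $\xi_1$, and no cross-terms ever enter. Your route would reach the same destination, but you would in effect be re-proving the regularity of $V_{tan}$ inside the integral identity instead of factoring it out beforehand.
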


\begin{proof}
Since $u\in \mathcal{C}^{2,1}$, from \eqref{A} we conclude that $\kappa_n$ is continuous
up to the boundary and that $\kappa$ is continuous on the open set where $\kappa_n$
differs from zero.

{\bf 1.} Let $V\in\mathcal{V}$.
From the observations following Lemma \ref{lem1} and from \eqref{det} we deduce that
$\Phi$ is bilipschitz on every set of the form:
$$ M'_{\delta} = \{(s, t)\in M_{s^{\pm}} : s\in (\delta - s^-(t),
s^+(t) - \delta)\} $$
with $\delta > 0$. Indeed, $\Phi^{-1}$ may fail to be globally Lipschitz on
$\Phi(M_{s^{\pm}})$ unless \eqref{uniadm} is satisfied.
Set $V_3 = (V\vec n)\circ u$ and
$f = V_3\circ \Phi$. We have $V_3\in W^{2,2}(\Omega)$.
By a similar reasoning as in Lemma \ref{lemregular}, we obtain that
$\Phi$ is a $\mathcal{C}^{1,1}$ diffeomorphism on $M'_\delta$.
Hence, Lemma \ref{chainrule} implies that $f\in W^{2,2}(M'_{\delta})$ with:
\begin{equation} \label{grad1}
\partial_s f(s,t) = \nabla V_3(x) N(t), \quad
\partial_t f(s,t) = (1-s\kappa(t))\nabla V_3(x)\Gamma'(t) \quad
\mbox{ where } x=\Phi(t,s),
\end{equation}
\begin{equation}\label{grad2}
\begin{split}
\partial^2_{ss}  f(s,t) &= \big(\nabla^2 V_3(x) N(t)\big) N(t),\\
\partial^2_{ts} f(s,t) &= (1-s\kappa(t))\big(\nabla^2 V_3(x)\vec
N(t)\big) \Gamma'(t)  - \kappa(t) \nabla V_3(x)\Gamma'(t),\\
\partial^2_{tt} f(s,t) &= (1-s\kappa(t))^2\big(\nabla^2
V_3(x)\Gamma'(t)\big) \Gamma'(t) +
\nabla V_3(x) \Big( \kappa (1-s\kappa) N(t)  - s\kappa'\Gamma'(t)\Big).
\end{split}
\end{equation}
Moreover, by (\ref{prob4}):
\begin{equation}\label{ss}
\partial^2_{ss} f(s,t) = 0 \qquad \forall t\in[0,T]\mbox{ with }\kappa_n(t)\neq 0.
\end{equation}
Indeed, $\theta = \mbox{curl}^T\mbox{curl }[B_{ij}]  =0$ in the
present case where $\mbox{sym}\nabla V=0$.

Let $0<\eta < \inf_{t\in [0,T]} \{s^-(t), s^+(t)\}$ so that
$ (-\eta, \eta)\times (0,T)\subset M_{s^{\pm}}$.
Denote by $f^*$ the precise representative of
$f$ \cite{EG} and define:
$$a(t) = f^*(0,t),\qquad b(t) = \frac{1}{\eta} (f^*(\eta,t) - a(t)).$$
By (\ref{prob4}) the definition of $b$ does not depend on $\eta$, and:
\begin{equation}\label{pr1cl1}
f(s,t)=a(t) + sb(t) \qquad\forall (s,t)\in M_{s^{\pm}}.
\end{equation}

\medskip

{\bf 2.} Since $f\in W^{2,2}((0,T)\times (-\eta, \eta))$,
for almost every pair $s_1, s_2\in (-\eta, \eta)$ the traces $f(\cdot, s_1)$ and $f(\cdot, s_2)$
belong to $W^{2,2}(0,T)$, by Fubini's theorem. Hence $b=
\frac{1}{s_1-s_2}(f(\cdot, s_1) - f(\cdot, s_2)) \in W^{2,2}((0,T))$,
and $a\in  W^{2,2}((0,T))$ as well.
By Sobolev embedding $f\in \mathcal{C}^{1,1/2}(M_{s^{\pm}})$.

Since $\Phi$ is a $\mathcal{C}^{1,1}$ diffeomorphism, it also follows
that $V\vec  n$, $V_{tan} \in\mathcal{C}^{1,1/2}(S)$ which implies \cite{lemopa1}
that $V_{tan}\in\mathcal{C}^{2,1/2}$. Finally, (\ref{e}) follows from
Lemma \ref{lem1}.

\medskip

{\bf 3.}  We shall now prove that, given the structure (\ref{pr1cl1}),
condition $V\vec n\in W^{2,2}(S)$ is equivalent to $a,b\in W^{2,2}(0,T)$
satisfying (\ref{est1}), (\ref{est2}). This will conclude the proof.

Inserting (\ref{grad1}) into (\ref{grad2}) we obtain, for all $t\in
[0,T]\setminus I_0$:
\begin{equation*}
\begin{split}
b'(t) &= \partial^2_{ts}f(s,t) = (1 - s\kappa)\big(\nabla^2 V_3(x)\vec
N(t)\Big)\Gamma'(t)  - \frac{\kappa}{1 - s\kappa} (a'(t) + sb'(t)),\\
a''(t) + sb''(t) &= \partial^2_{tt}f(s,t)= (1 -
s\kappa)^2\big(\nabla^2 V_3(x) \Gamma'(t)\big) \Gamma'(t)
+ \kappa(1-s\kappa) b(t) - \frac{s\kappa'}{1 - s\kappa}(a'(t) + sb'(t)).
\end{split}
\end{equation*}
Solving for  $\nabla^2 V_3(x)$ we get:
\begin{equation*}
\begin{split}
\big(\nabla^2V_3(x) \Gamma'(t)\big) \Gamma'(t) & = \frac{1}{(1 -
  s\kappa)^2} \Big(a''(t) + sb''(t) - \kappa (1-s\kappa)b(t) + \frac{s\kappa'}{1-s\kappa}
(a'(t) + sb'(t))\Big),\\
\big(\nabla^2V_3(x) N'(t)\big) \Gamma'(t)& = \frac{1}{1 -s\kappa}
\Big(b'(t) + \frac{\kappa'}{1 - s\kappa} (a'(t) + sb'(t)\Big),\\
\big(\nabla^2V_3(x) N'(t)\big) N'(t)& = 0.
\end{split}
\end{equation*}
Now a change of variables shows that:
\begin{equation*}
\int_{\Omega} |\nabla^2 V_3(x)|^2\mbox{d}x = \int_{M_{s^{\pm}}} |(\nabla^2 V_3)(\Phi(s, t))|^2
(1 - s\kappa)\mbox{d}s\mbox{d}t.
\end{equation*}
We see that  $V_3\in W^{2,2}(\Omega)$ if and only if  (\ref{est1}) and
(\ref{est2}) hold.
\end{proof}

We finish this section by pointing out a straightforward corollary of
the above calculations:
\begin{proposition}\label{pr2}
Let $v\in W^{2,2}(S)$ satisfy
\begin{equation*}\label{V3'}
v(u(\Phi(s,t))) = a(t) + sb(t) \quad \mbox{ for a.e. }(s, t) \in M_{s^{\pm}}.
\end{equation*}
Then $a, b\in W^{2,2}(0,T)$ and there exists a tangent vector field
$V_{tan}\in W^{2,2}(S,\mathbb{R}^3)$ to $S$ such that
$V_{tan} + v \vec n \in{ \mathcal V}$.
\end{proposition}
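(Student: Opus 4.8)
\emph{Plan.} The idea is to mirror the reduction of Section~\ref{equation}: pull everything back to $\Omega$ via $u$ and $\Phi$, exploit that the prescribed normal component is already affine along the rulings so that the compatibility condition for the linearized metric equation is automatically satisfied, and then solve that equation with enough regularity. Throughout put $w_3=v\circ u\in W^{2,2}(\Omega)$ and $f=w_3\circ\Phi$, so that the hypothesis reads $f(s,t)=a(t)+sb(t)$ a.e.\ on $M_{s^{\pm}}$, and in particular $\partial^2_{ss}f\equiv 0$.

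\emph{Step 1: $a,b\in W^{2,2}(0,T)$.} This is Steps~1--2 of the proof of Theorem~\ref{prop1-1storder} applied with $w_3$ in place of $(V\vec n)\circ u$; the only difference is that the affine structure of $f$, there obtained from $\mathrm{sym}\nabla V=0$ via \eqref{ss}, is here part of the hypothesis, so no isometry is needed. As in that proof (and as in Lemma~\ref{lemregular}), $\Phi$ restricts to a $\mathcal C^{1,1}$ diffeomorphism of interior strips $(-\eta,\eta)\times(0,T)\subset M_{s^{\pm}}$ with $\eta>0$ small; Lemma~\ref{chainrule}, used twice, together with this $\mathcal C^{1,1}$ regularity, gives $f\in W^{2,2}\big((-\eta,\eta)\times(0,T)\big)$. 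Fubini then yields $f(\cdot,s_1),f(\cdot,s_2)\in W^{2,2}(0,T)$ for a.e.\ $s_1\neq s_2$, whence $b=(s_1-s_2)^{-1}\big(f(\cdot,s_1)-f(\cdot,s_2)\big)$ and $a=f(\cdot,s_1)-s_1b$ lie in $W^{2,2}(0,T)$.

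\emph{Step 2: construction of $V_{tan}$.} Following Section~\ref{equation}, an infinitesimal isometry $w$ on $S$ with $w\vec n=v$ is, by \eqref{prob1} with $B\equiv0$, the same as a solution $w'\in W^{1,2}(\Omega,\RR^2)$ of $\mathrm{sym}\nabla w'=w_3[a_{ij}]$, after which the tangential field is recovered. Since $\Omega$ is simply connected, this is solvable iff $\mathrm{curl}^T\mathrm{curl}\big(w_3[a_{ij}]\big)=0$; by the identity \eqref{prob2} together with $\mathrm{curl}[a_{ij}]=0$ (the computation following \eqref{prob2}) this reduces to $\mathrm{cof}\,\nabla^2w_3:[a_{ij}]=0$, and by \eqref{A} (as in \eqref{prob3}--\eqref{prob4}) the left-hand side equals $\big(\frac{\kappa_n}{1-s\kappa}\,\partial^2_{ss}f\big)\circ\Phi^{-1}=0$. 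Since $[a_{ij}]$ is only Lipschitz one must keep it out of every second derivative; this is legitimate precisely because $\mathrm{curl}[a_{ij}]\equiv0$, so that all Leibniz rules used involve only products of $W^{1,q}$ functions with the $W^{1,\infty}$ field $[a_{ij}]$. Hence $w'$ exists, and since $w_3\in W^{2,2}(\Omega)\hookrightarrow W^{1,q}(\Omega)$ for every $q<\infty$ (here $\Omega\subset\RR^2$) while $[a_{ij}]\in\mathcal C^{0,1}(\overline\Omega)$, we have $w_3[a_{ij}]\in W^{1,q}(\Omega)$; as $\nabla^2 w'$ is a linear combination of first derivatives of $\mathrm{sym}\nabla w'$, this gives $w'\in W^{2,q}(\Omega,\RR^2)\subset W^{2,2}$. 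Define $V_{tan}$ on $S$ by $(V_{tan}\circ u)=\sum_j w'_j\,\partial_j u$; since $\partial_j u\in\mathcal C^{1,1}(\overline\Omega)$, this is a tangent field with $V_{tan}\circ u\in W^{2,q}(\Omega,\RR^3)\subset W^{2,2}$, i.e.\ $V_{tan}\in W^{2,2}(S,\RR^3)$, and $w'=(V_{tan}\circ u)^T\nabla u$.

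\emph{Step 3: conclusion, and the main obstacle.} Since $v\in W^{2,2}(S)$ and $\vec n\in\mathcal C^{1,1}(S,\RR^3)$ (as $u\in\mathcal C^{2,1}$), we have $v\vec n\in W^{2,2}(S,\RR^3)$, hence $w:=V_{tan}+v\vec n\in W^{2,2}(S,\RR^3)$. Its normal component is $v$ and its tangential part is $V_{tan}$, so its pulled-back data are exactly $w_3$ and $w'$; by \eqref{prob1} and $\mathrm{sym}\nabla w'=w_3[a_{ij}]$ the pulled-back form $[B_{ij}]$ vanishes, so $\mathrm{sym}\nabla w\equiv0$ as a bilinear form on $TS$ (because $\{\partial_1u(x),\partial_2u(x)\}$ spans $T_{u(x)}S$). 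Thus $V_{tan}+v\vec n\in\mathcal V$. The step requiring genuine care is the solvability check in Step~2 with $w_3$ only in $W^{2,2}$ and $[a_{ij}]$ only Lipschitz, i.e.\ justifying $\mathrm{curl}^T\mathrm{curl}(w_3[a_{ij}])=0$ distributionally; this is exactly why one must never differentiate $[a_{ij}]$ twice, which is possible only because $\mathrm{curl}[a_{ij}]\equiv0$ and $\partial^2_{ss}f\equiv0$. A minor secondary point is that one needs the $W^{2,2}$—not merely $W^{1,p}$—regularity of $w'$ and of $V_{tan}$, which is precisely what the two-dimensional embedding $W^{2,2}\hookrightarrow W^{1,q}$ delivers.
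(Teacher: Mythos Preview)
Your proof is correct and follows the approach implicit in the paper, which treats the proposition as ``a straightforward corollary of the above calculations'' without further detail. Your write-up supplies precisely those details---the reduction of $a,b\in W^{2,2}(0,T)$ to Steps~1--2 of Theorem~\ref{prop1-1storder}, the solvability of $\mathrm{sym}\,\nabla w'=w_3[a_{ij}]$ via the compatibility condition $\mathrm{curl}^T\mathrm{curl}(w_3[a_{ij}])=0$ (exactly the computation of Section~\ref{equation} with $B\equiv0$), and the $W^{2,2}$ regularity of $V_{tan}$ via the two-dimensional embedding $W^{2,2}\hookrightarrow W^{1,q}$---and your care in justifying the Leibniz expansion \eqref{prob2} when $[a_{ij}]$ is only Lipschitz and $w_3$ only $W^{2,2}$ is well placed.
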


\section{Matching and density of infinitesimal isometries}

\begin{definition}
A one parameter family $\{u_\e\}_{\e > 0}\subset \mathcal{C}^{0,1}(\overline{S}, {\mathbb R}^3)$ is said
to be a (generalized) $N$th order infinitesimal isometry if the change
of metric induced by $u_\e$ is of order $\e^{N+1}$, that is:
\begin{equation}\label{iso}
\|(\nabla u_{\e})^T(\nabla u_{\e}) - \mathrm{Id}\|_{L^{\infty}(S)} = \mathcal{O}(\e^{N+1})
\mbox{ as }\e\to 0.
\end{equation}
\end{definition}
Here and in what follows we use the Landau symbols $\mathcal{O}(q)$ and $o(q)$.
They denote, respectively, any quantity whose quotient with $q$ is uniformly
bounded or converges to $0$ as $q\to 0$.
%

Note that if $V\in \mathcal V \cap C^{0,1}$, then
$u_\e= \mbox{id} + \e V$ is a (generalized) first order isometry.

\begin{theorem}\label{th_nth-intro}
Let $S$ be a developable surface of class $\mathcal{C}^{2N,1}$, satisfying (\ref{convex}).
Given $V\in\mathcal{V}\cap\mathcal{C}^{2N-1,1}(\bar S)$, there exists
a family $\{w_\varepsilon\}_{\e > 0}\subset \mathcal{C}^{1,1}(S, \mathbb{R}^3)$, equibounded in
$\mathcal{C}^{1,1}(S)$, such that for all small $\varepsilon > 0$ the family:
$$ u_{\varepsilon} = \mathrm{id} +\varepsilon V + \varepsilon^2
w_\varepsilon $$
is a (generalized)
$N$th order isometry of class $\mathcal{C}^{1,1}$.
\end{theorem}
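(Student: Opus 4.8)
The plan is to construct $w_\varepsilon$ iteratively, killing the metric defect order by order in $\varepsilon$. Writing $u_\varepsilon = \mathrm{id} + \varepsilon V + \varepsilon^2 w_\varepsilon$ and expanding $(\nabla u_\varepsilon)^T(\nabla u_\varepsilon) - \mathrm{Id}$, the $\varepsilon$-term vanishes because $V\in\mathcal V$; the $\varepsilon^2$-term is $2\,\mathrm{sym}\nabla w_\varepsilon + (\nabla V)^T\nabla V$, so the leading obstruction is removed by solving $\mathrm{sym}\nabla w = -\frac12 (\nabla V)^T\nabla V$ on $S$. Here Theorem \ref{prop1} (and its refinement Proposition \ref{prop2} with $\phi=\psi=V$) applies: since $V\in\mathcal{C}^{2N-1,1}$ and $S\in\mathcal{C}^{2N,1}$, the bilinear form $B = -\frac12\,\mathrm{sym}((\nabla V)^T\nabla V)$ is of class $\mathcal{C}^{2N-2,1}$, and Proposition \ref{prop2} yields a solution $w^{(1)}$ with $w^{(1)}_{tan}\in\mathcal{C}^{2N-3,1}$ and $w^{(1)}\vec n\in\mathcal{C}^{2N-4,1}$, controlled by $\|V\|_{\mathcal{C}^{2N-1,1}}^2$. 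Note the loss of three derivatives at the first step and two at each subsequent step, which is exactly why the hypotheses ask for $\mathcal{C}^{2N,1}$ regularity of $S$ and $\mathcal{C}^{2N-1,1}$ regularity of $V$: this is just enough regularity budget to perform $N-1$ such solves and still land in $\mathcal{C}^{1,1}$.

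Concretely, I would define $w_\varepsilon = \sum_{k=1}^{N-1} \varepsilon^{k-1} w^{(k)}$, where the fields $w^{(k)}$ are chosen recursively so that the partial sums cancel all powers $\varepsilon^2,\dots,\varepsilon^N$ in the metric. Having chosen $w^{(1)},\dots,w^{(k-1)}$, set $V_\varepsilon^{(k-1)} = \mathrm{id} + \varepsilon V + \varepsilon^2\sum_{j=1}^{k-1}\varepsilon^{j-1}w^{(j)}$; its metric defect is $\mathcal{O}(\varepsilon^{k+1})$ by induction, and the coefficient of $\varepsilon^{k+1}$ has the form $2\,\mathrm{sym}\nabla w^{(k)} + P_k$, where $P_k$ is a fixed symmetric bilinear form built from $\nabla V$ and the $\nabla w^{(j)}$, $j<k$, hence expressible (after symmetrization) in the $\mathrm{sym}((\nabla\phi)^T\nabla\psi)$ form covered by Proposition \ref{prop2}. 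Solving $2\,\mathrm{sym}\nabla w^{(k)} = -P_k$ via Theorem \ref{prop1}/Proposition \ref{prop2} removes that order; the regularity bookkeeping shows $w^{(k)}$ loses two further derivatives relative to $w^{(k-1)}$, so after the last step $w^{(N-1)}$ is still at least $\mathcal{C}^{1,1}$, and likewise all lower $w^{(j)}$. Then $w_\varepsilon$ is a finite sum of $\mathcal{C}^{1,1}$ fields with bounds independent of $\varepsilon$, hence equibounded in $\mathcal{C}^{1,1}(S)$; and $u_\varepsilon = \mathrm{id}+\varepsilon V+\varepsilon^2 w_\varepsilon\in\mathcal{C}^{1,1}$.

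It remains to check that the final metric defect is genuinely $\mathcal{O}(\varepsilon^{N+1})$ as required by \eqref{iso}. After the cancellation, $(\nabla u_\varepsilon)^T(\nabla u_\varepsilon) - \mathrm{Id}$ equals a polynomial in $\varepsilon$ of degree $2N$ with no terms below $\varepsilon^{N+1}$, whose coefficients are products of $\nabla V$ and the $\nabla w^{(j)}$ — all in $L^\infty(S)$ with norms bounded uniformly in $\varepsilon$. Hence the $L^\infty(S)$ norm of the defect is $\mathcal{O}(\varepsilon^{N+1})$, uniformly for small $\varepsilon$, which is precisely the definition of a generalized $N$th order infinitesimal isometry.

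The main obstacle is the regularity accounting: one must verify that each $P_k$ really does fall under the hypotheses of Proposition \ref{prop2} (i.e. that after expanding and symmetrizing, the lower-order terms assemble into forms $\mathrm{sym}((\nabla\phi)^T\nabla\psi)$ with $\phi,\psi$ in the appropriate $\mathcal{C}^{m,1}$ class), and that the two-derivatives-per-step loss is tight enough that the worst field, $w^{(N-1)}$, still lands in $\mathcal{C}^{1,1}$. A secondary but routine point is that solving $\mathrm{sym}\nabla w = B$ on $S$ leaves a free choice of an infinitesimal rigid motion; this ambiguity is harmless — any fixed normalization (as in the proof of Theorem \ref{prop1}) gives the stated bounds — and does not affect the metric, since infinitesimal rigid motions added to $w^{(k)}$ change $u_\varepsilon$ by an $\mathcal{O}(\varepsilon^{k+1})$ skew term that contributes only at orders already controlled.
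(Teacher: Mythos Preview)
Your approach is essentially the paper's: build $w_\varepsilon$ as a finite sum $\sum_{k=1}^{N-1}\varepsilon^{k-1}w^{(k)}$, where each $w^{(k)}$ solves $\mathrm{sym}\,\nabla w^{(k)} = -\tfrac12\sum_{p}\mathrm{sym}\big((\nabla w^{(p)})^T\nabla w^{(k-p)}\big)$ via Theorem \ref{prop1}/Proposition \ref{prop2}. The structure of the argument is right.

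There is, however, a concrete error in your regularity bookkeeping that, taken literally, breaks the conclusion. You write that Proposition \ref{prop2} gives $w^{(1)}_{tan}\in\mathcal{C}^{2N-3,1}$ and $w^{(1)}\vec n\in\mathcal{C}^{2N-4,1}$, i.e.\ a loss of three derivatives at the first step. That is the count coming from Theorem \ref{prop1} applied to $B\in\mathcal{C}^{2N-2,1}$; but the whole point of Proposition \ref{prop2} is that when $B=\mathrm{sym}((\nabla\phi)^T\nabla\psi)$ the third derivatives cancel in $\theta=\mathrm{curl}^T\mathrm{curl}\,[B_{ij}]$, so one gains a derivative: with $\phi=\psi=V\in\mathcal{C}^{2N-1,1}$ (hence $k=2N-2$ in Proposition \ref{prop2}) one obtains $w^{(1)}_{tan}\in\mathcal{C}^{2N-2,1}$ and $w^{(1)}\vec n\in\mathcal{C}^{2N-3,1}$, a loss of exactly two. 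This matters: with your stated count (loss of $3$ then $2$ per step) one finds $w^{(N-1)}\in\mathcal{C}^{0,1}$, not $\mathcal{C}^{1,1}$, and the equiboundedness in $\mathcal{C}^{1,1}$ fails. With the correct uniform loss of two, $w^{(k)}\in\mathcal{C}^{2N-1-2k,1}$ and $w^{(N-1)}\in\mathcal{C}^{1,1}$, exactly as required. This is precisely the accounting in the paper, where the corrections are indexed as $w_j\in\mathcal{C}^{2N-2j+1,1}$ for $j=2,\ldots,N$. Once you fix this single off-by-one, your proof matches the paper's.
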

\begin{proof}
{\bf 1.} The result is a consequence of the following claim.
Let $S$ be of class $\mathcal{C}^{k+2,1}$ and let $u_\e$ be an $(i-1)$th order isometry of regularity
$\mathcal{C}^{k+1,1}$ of the form:
$$  u_\e= \mbox{id} + \sum_{j=1}^{i-1}\e^j w_j, \qquad w_j\in \mathcal{C}^{k+1,1}.$$
Then there exists $w_i\in\mathcal{C}^{k-1,1}(S,\mathbb{R}^3)$ so that
$\phi_\e= u_\e + \e^i w_i$ is an $i$th order infinitesimal isometry,
and:
\begin{equation}\label{new}
\|w_i\|_{\mathcal{C}^{k-1,1}} \le C \sum_{j=1}^{i-1} \|w_j\|_{\mathcal{C}^{k+1,1}}
\|w_{i-j}\|_{\mathcal{C}^{k+1,1}}.
\end{equation}
Indeed, setting $w_1=V\in\mathcal{C}^{2N-1,1}$ and applying the above
result iteratively to find $w_j\in\mathcal{C}^{2N-2j+1,1}$, for
$j = 2\ldots N$,  we obtain the requested $w_\e = w_2 + \e w_3 + \cdots \e ^{N-2}
w_N\in\mathcal{C}^{1,1}$.

\smallskip

{\bf 2.} We now prove the claim. Set $w_0= \mbox{id}$. Calculating the change of metric
induced by the deformation $\phi_\e$ we get:
$$ |(\nabla \phi_\e)^T\nabla \phi_\e - \mbox{Id}| =
\left|\sum_{j=1}^i \e^j A_j\right| + \mathcal{O}(\e^{i+1}), $$
where the expression $A_j$ indicating the change of metric of  $j$th
order induced by $\phi_\e$, is given by:
$$ A_j =  \sum_{p=0}^j \mbox{sym} \Big( (\nabla w_p)^T \nabla w_{j-p}
\Big ). $$
Note that by the assumption $A_1=\cdots= A_{i-1}=0$.
Consequently, in order for $\phi_\e$ to be an $i$th order isometry, we must have
$A_i=0$ or equivalently:
$$ \mbox{sym} \nabla w_i = -\frac12 \sum_{p=1}^{i-1} \mbox{sym} \Big(
(\nabla w_p)^T \nabla w_{i-p} \Big ).$$
Applying Theorems \ref{prop1} and \ref{prop2}, we obtain that such $w_k$ exists with the estimate:
$$ \|w_{i,tan}\|_{\mathcal{C}^{k,1}} + \|w_{i,3}\|_{\mathcal{C}^{k-1,1}} \le C
\sum_{p=1}^{i-1} \|w_p\|_{\mathcal{C}^{k+1,1}}\|w_{i-p}\|_{\mathcal{C}^{k+1,1}}, $$
provided that all $w_p\in\mathcal{C}^{k+1,1}$ and that $S$ is of class $\mathcal{C}^{k+2, 1}$.
This completes the proof of the claim and of the theorem.
\end{proof}

\begin{theorem}\label{th_density-intro}
Assume that $S$ is developable, of class $\mathcal{C}^{k+1,1}$ up
to the boundary, and satisfying (\ref{convex}).
Then, for every $V\in\mathcal{V}$ there exists a sequence
$V_n\in\mathcal{V}\cap\mathcal{C}^{k,1}(\bar S,\mathbb{R}^3) $
such that: $$\lim_{n\to\infty} \|V_n - V\|_{W^{2,2}(S)} = 0.$$
\end{theorem}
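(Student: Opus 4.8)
The plan is to exploit the explicit structure of $\mathcal V$ established in Theorem~\ref{prop1-1storder}: a field $V\in\mathcal V$ is completely encoded by the pair $(a,b)\in W^{2,2}((0,T),\RR^2)$ via \eqref{V3}, \eqref{e}, subject to the finiteness of the energies $J_1(a,b)$ and $J_2(a,b)$ of \eqref{est1}, \eqref{est2}. So the first step is to reduce density in $\mathcal V$ (with respect to the $W^{2,2}(S)$ norm) to an approximation statement for the pair $(a,b)$ in an appropriate weighted Sobolev norm. Concretely, I would note that by \eqref{energie}-type change of variables and the computation at the end of the proof of Theorem~\ref{prop1-1storder}, the quantity $\|V-\tilde V\|_{W^{2,2}(S)}^2$ is controlled, up to the bilipschitz constants of $\Phi$ (which are uniform under \eqref{convex}, hence under our hypotheses, since $\mathcal{C}^{k+1,1}$ with $k\geq 1$ gives at least $\mathcal{C}^{2,1}$), by $\|a-\tilde a\|_{L^\infty}^2+\|b-\tilde b\|_{L^\infty}^2$ together with $J_1(a-\tilde a,b-\tilde b)+J_2(a-\tilde a,b-\tilde b)$. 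The key point is that these are quadratic forms in $(a,b)$ with $L^\infty$ coefficients (the coefficients involve $\kappa,\kappa',1/(1-s\kappa)$, all bounded by \eqref{uniadm} and Lemma~\ref{lemregular} once $S\in\mathcal{C}^{3,1}$, and in general bounded because of \eqref{locadm} and \eqref{LL^11}), and the linear map $(a,b)\mapsto V$ is a bounded isomorphism onto $\mathcal V$ when the domain is equipped with the norm $\|a\|_\infty+\|b\|_\infty+J_1^{1/2}+J_2^{1/2}$.

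Given this reduction, the second step is the actual approximation: given $(a,b)\in W^{2,2}((0,T),\RR^2)$ with $J_1(a,b)+J_2(a,b)<\infty$, construct $(a_n,b_n)\in \mathcal{C}^{k,1}([0,T],\RR^2)$ — e.g.\ smooth, say polynomial or mollified — converging to $(a,b)$ in $W^{2,2}(0,T)$, hence in $\mathcal{C}^{1,1/2}\hookrightarrow\mathcal{C}^0$, and with $J_\ell(a_n,b_n)\to J_\ell(a,b)$. Since $a_n,b_n$ are smooth, the corresponding $V_n$ obtained from Proposition~\ref{pr2} (or directly by integrating \eqref{e}) is a legitimate element of $\mathcal V$; its regularity is $\mathcal{C}^{k,1}$ because $\Gamma,N,\kappa,\kappa_n\in\mathcal{C}^{k-1,1}$ by Lemma~\ref{lemregular} applied with $S\in\mathcal{C}^{k+1,1}$, so that solving $\mathrm{sym}\nabla V_{tan}$ from the smooth right-hand side of \eqref{e} gains regularity exactly as in Theorem~\ref{prop1}. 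The convergence $J_\ell(a_n,b_n)\to J_\ell(a,b)$ is where I would be careful: the integrands are quadratic in $(a_n'',b_n'',a_n',b_n',b_n)$ with fixed $L^\infty$ weights, so $W^{2,2}(0,T)$-convergence of $(a_n,b_n)$ to $(a,b)$ gives $L^2$-convergence of each integrand factor, and dominated convergence (or simply continuity of the quadratic form on $W^{2,2}$ with bounded weights) closes it; one subtlety is that $J_1,J_2$ integrate over the two-dimensional domain $M_{s^\pm}$, but since the $(a,b)$-dependence is only through functions of $t$, Fubini turns the $s$-integral of the weights into bounded functions of $t$, reducing everything to one-dimensional $L^2$ estimates.

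For the construction of $(a_n,b_n)$ itself I would simply mollify: extend $a,b$ to $W^{2,2}(\RR)$ by a standard bounded extension operator, convolve with a mollifier, and restrict; this yields $\mathcal{C}^\infty([0,T])\subset\mathcal{C}^{k,1}([0,T])$ approximants with $\|a_n-a\|_{W^{2,2}(0,T)}\to 0$. No compatibility at $t=0,T$ is needed because $\mathcal V$ imposes no boundary conditions on $(a,b)$ (the only constraints are \eqref{est1},\eqref{est2}, which persist under convergence). Finally I would record that the resulting $V_n=V_{n,tan}+(V_n\vec n)\vec n$ lies in $\mathcal{C}^{k,1}(\bar S,\RR^3)$: the normal part is $(a_n(t)+sb_n(t))$ in $\Phi$-coordinates, $\mathcal{C}^{k,1}$ by Lemma~\ref{lemregular}; the tangential part solves a $\mathrm{sym}\nabla$ equation with $\mathcal{C}^{k-1,1}$ right-hand side, hence is $\mathcal{C}^{k,1}$ (here one only needs enough regularity of $S$ to run the argument of Theorem~\ref{prop1}, and $\mathcal{C}^{k+1,1}$ suffices).

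The main obstacle I anticipate is not any single step but making the weighted-norm equivalence in the first step airtight \emph{uniformly}, i.e.\ verifying that the passage from $(a,b)$ to $V$ and back does not lose control near the degenerate set $I_0$ where $1-s\kappa$ can vanish on the boundary of $M_{s^\pm}$. The safeguard is that finiteness of $J_1,J_2$ is \emph{defined} with exactly the weights $(1-s\kappa)^{-1}$ and $(1-s\kappa)^{-3}$ that appear in $\|V\|_{W^{2,2}(S)}^2$ after change of variables, so the potential blow-up is built into both sides symmetrically; and under \eqref{convex}, \eqref{uniadm} gives $1-s\kappa\geq \delta\,s^\pm(t)>0$ uniformly, so the weights are in fact bounded and the degeneracy disappears entirely. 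Thus the argument is genuinely soft once the structural identification of Theorem~\ref{prop1-1storder} is invoked; the only real work is bookkeeping the quadratic forms and the regularity gain in the $\mathrm{sym}\nabla$ solve.
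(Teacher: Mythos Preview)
Your proposal is correct and follows essentially the same approach as the paper's proof: reduce to the pair $(a,b)\in W^{2,2}(0,T)$ via Theorem~\ref{prop1-1storder}, approximate by smooth $(a_n,b_n)$, reconstruct $V_n\in\mathcal V$ via Proposition~\ref{pr2}, use \eqref{uniadm} to bound the weights so that $J_i(a-a_n,b-b_n)\to 0$, and read off the $\mathcal{C}^{k,1}$ regularity of $V_n$ from Lemma~\ref{lemregular}. One small wording issue: in your second paragraph you phrase the goal as $J_\ell(a_n,b_n)\to J_\ell(a,b)$, but (as you correctly state in your first paragraph) what is actually needed is $J_\ell(a-a_n,b-b_n)\to 0$; since you observe that the $J_\ell$ are continuous quadratic forms on $W^{2,2}(0,T)$ once the weights are bounded, both follow and the argument goes through.
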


\begin{proof}
Let $a,b\in W^{2,2}(0,T)$ be as in Proposition \ref{prop1-1storder}.
Take $a_n, b_n \in \mathcal{C}^\infty ([0,T])$ converging in $W^{2,2}$
to $a$, $b$ respectively, and define:
$$ v_n(s,t)= a_n(t) + s b_n(t). $$  
By Propositions \ref{prop1-1storder} and \ref{pr2}, there
exist $V_n \in \mathcal V$ such that $(V_n \vec n ) \circ u \circ \Phi = v_n$ and
$\|V_n\vec n - V \vec n\|_{W^{2,2}(S)}\to 0$. Indeed, the last assertion is equivalent to proving that  
$J_i(a-a_n, b-b_n) \to 0$, $i=1,2$, which is established immediately after observing that $1-s\kappa$ is 
bounded away from $0$ by (\ref{uniadm}). Note that $(V_n)_{tan}$ can
be chosen suitably such that also $\|(V_n)_{tan} - V_{tan}\|_{W^{2,2}(S)}\to 0$.
In view of Lemma \ref{lemregular},
$V_n \circ u\in \mathcal{C}^{k,1} (\overline{\Omega})$, that is 
$V_n \in \mathcal{C}^{k,1}$ up to the boundary of $S$.
\end{proof}

\section{The $\Gamma$-limit result}

Consider a family $\{S^h\}_{h>0}$ of thin shells of thickness $h$ around $S$:
$$S^h = \{z=p + t\vec n(p); ~ p\in S, ~ -h/2< t < h/2\}, \qquad 0<h<h_0,$$
where $h_0$ is small enough so that the projection map $\pi : S^{h_0}\to S, \pi(p+ t \vec n(p) := p$ is well-defined.  
For a $W^{1,2}$ deformation $u^h: S^h\rightarrow \mathbb{R}^3$,
we assume that its elastic energy (scaled per unit thickness)
is given by the nonlinear functional:
$$E^h(u^h) = \frac{1}{h}\int_{S^h} W(\nabla u^h).$$The stored-energy density function $W:\mathbb{R}^{3\times 3}\longrightarrow
[0,\infty]$ is $\mathcal{C}^2$  in an open neighborhood of $SO(3)$,
and it is assumed to satisfy the conditions of normalization, frame
indifference and quadratic growth:
\begin{equation*}
\begin{split}
\forall F\in \mathbb{R}^{3\times 3} \quad
\forall R\in SO(3) \qquad
&W(R) = 0, \quad W(RF) = W(F), \\
&W(F)\geq C \mathrm{dist}^2(F, SO(3)),
\end{split}
\end{equation*}
with a uniform constant $C>0$.
The potential $W$ induces the quadratic forms:
\begin{equation*}\label{Qmatrices}
\mathcal{Q}_3(F) = D^2W(\mbox{Id})(F,F), \qquad
\mathcal{Q}_2(p, F_{tan}) = \min\{\mathcal{Q}_3(\tilde F); ~~ (\tilde F - F)_{tan} = 0\}.
\end{equation*}
defined for  $F\in\mathbb{R}^{3\times 3}$, and
$p\in S$ respectively. Here and in what follows $F_{tan}$ is the bilinear form induced by $F$ on $S$
through the formula:
\begin{equation*}
\displaystyle F_{tan} (\tau, \eta) = \tau \cdot F(p) \eta \quad \forall p\in S,\,  \tau, \eta \in T_pS.
\end{equation*}
Both forms $\mathcal{Q}_3$ and all $\mathcal{Q}_2(p,\cdot)$ depend only on the symmetric parts of their arguments,
with respect to which they are positive definite \cite{FJMgeo}.

In what follows we shall consider a sequence $e^h>0$ such that:
\begin{equation}\label{scaling-intro}
0<\lim_{h\to 0} e^h/h^\beta < +\infty, \qquad \mbox{ for some }~~ 2<\beta<4.
\end{equation}
Also, let:
$$\beta_N = 2+ 2/N.$$ 
We assume that $N>1$ (the case $N=1$ is already
covered in \cite{lemopa1}).
Recall the following result:
\begin{theorem}\label{thliminf-intro}\cite{lemopa1}.
Let $S$ be a surface embedded in $\mathbb{R}^3$, which is
compact, connected, oriented, of class $\mathcal{C}^{1,1}$,
and whose boundary $\partial S$ is the union of finitely many
Lipschitz curves.
Let $u^h\in W^{1,2}(S^h,\mathbb{R}^3)$ be a sequence of deformations whose scaled energies
$E^h(u^h)/e^h$ are uniformly bounded.
Then there exist a sequence $Q^h\in SO(3)$ and $c^h\in\mathbb{R}^3$
such that for the normalized rescaled deformations:
$$y^h(p+t\vec{n}) = Q^h u^h(p+h/h_0 t\vec{n}) - c^h$$
defined on the common domain $S^{h_0}$, the following holds.
\begin{enumerate}
\item[(i)] $y^h$ converge in $W^{1,2}(S^{h_0})$ to $\pi$.
\item[(ii)]  The scaled average displacements:
\begin{equation}\label{Vh-intro}
V^h(p) = \frac{h}{\sqrt{e^h}} \fint_{-h_0/2}^{h_0/2}
y^h(p+t\vec{n}) - p ~\mathrm{d}t
\end{equation}
converge (up to a subsequence) in $W^{1,2}(S)$ to some $V\in \mathcal{V}$.
\item[(iii)] $\liminf_{h\to 0} {1}/{e^h} E^h(u^h) \geq \mathcal{I}(V)$, where:
\begin{equation}\label{limitenergy}
\mathcal{I}(V) = \frac{1}{24}
\int_S \mathcal{Q}_2\Big(p,\big(\nabla(A\vec n) - A\Pi\big)_{tan}\Big) \, \mbox{d}p.
\end{equation}
Here,  the matrix field $A\in W^{1,2}(S,\mathbb{R}^{3\times 3})$ is
such that:
\begin{equation*}\label{Vass1}
\partial_\tau V(p) = A(p)\tau \quad \mbox{and} \quad  A(p)\in so(3) \qquad
\forall {\rm{a.e.}} \,\, p\in S \quad \forall \tau\in T_p S.
\end{equation*}
\end{enumerate}
\end{theorem}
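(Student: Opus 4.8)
The plan is to establish Theorem \ref{thliminf-intro} as a $\Gamma$-limit lower-bound statement whose proof is entirely analogous to the corresponding results for general shells, the only novelty being that the surface here is developable. The strategy has three stages: first, a compactness argument using the geometric rigidity estimate of Friesecke--James--M\"uller to produce the rotations $Q^h$, the limiting displacement $V\in\mathcal V$, and the associated skew field $A$; second, an identification of $A\vec n$ and the relevant components of $\nabla(A\vec n)-A\Pi$ as the limiting strain; third, a lower bound on the energy via Taylor expansion of $W$ near $SO(3)$, weak lower semicontinuity of the quadratic form $\mathcal Q_2$, and an optimization over the out-of-plane components that produces the factor $1/24$.

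First I would invoke the rigidity estimate on a covering of $S^h$ by bi-Lipschitz images of unit cubes, patch the resulting piecewise-constant rotations into a single $W^{1,2}$ rotation field $R^h:S\to SO(3)$ with $\|\nabla R^h\|_{L^2(S)}^2\le C\, E^h(u^h)/h^2 \le C\, e^h/h^2$, and set $Q^h$ to be a mean value of $(R^h)^{-1}$. Since $e^h/h^2\to 0$ under \eqref{scaling-intro} with $\beta>2$, the rescaled deformations $y^h$ converge to an isometric immersion of $S$ that, being $W^{1,2}$-close to the identity, must equal $\pi$; this is assertion (i). For (ii), one writes $V^h$ as in \eqref{Vh-intro}, uses the bound on $\nabla R^h$ together with the second-order Taylor expansion of the energy to see that $(Q^hR^h-\mathrm{Id})/\sqrt{e^h}$ is bounded in $W^{1,2}$, extracts a weak limit $A\in W^{1,2}(S,so(3))$, and checks that $\partial_\tau V = A\tau$; the constraint $A(p)\in so(3)$ forces $\mathrm{sym}\nabla V=0$, i.e. $V\in\mathcal V$. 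For (iii), I would expand
$$W(\nabla u^h) = \frac12\mathcal Q_3\big(G^h\big) + o(|G^h|^2),$$
where $G^h$ is the scaled nonlinear strain $\approx \sqrt{e^h}^{-1}((R^h)^{-1}\nabla u^h - \mathrm{Id})$, pass to the liminf using the fact that $\mathcal Q_3$ is convex and depends only on symmetric parts, minimize over the normal direction to replace $\mathcal Q_3$ by $\mathcal Q_2$, and integrate the resulting $t^2$ profile across the thickness to get the prefactor $\frac{1}{24}$, recognizing $\big(\nabla(A\vec n)-A\Pi\big)_{tan}$ as the limiting tangential strain.

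The main obstacle I anticipate is the same as in the non-developable case: controlling the nonlinear strain uniformly so that the error term in the Taylor expansion is genuinely $o(\|G^h\|_{L^2}^2)$ and does not interfere with lower semicontinuity. This requires the standard truncation argument (replacing $G^h$ by its truncation at height $\sqrt{e^h}^{-1/2}$, say, and showing the discarded part has vanishing energy contribution), combined with the careful bookkeeping needed to pass from the ``interior'' estimate on $S^h$ to an estimate on the rescaled cylinder $S^{h_0}$ and to identify the weak limits of the various rescaled quantities. Since the surface's developability plays no role in any of these steps — only its $\mathcal C^{1,1}$ regularity and the Lipschitz boundary are used — I would simply remark that the proof is identical to that of the lower-bound half of the corresponding theorem in \cite{lemopa1} (and its generalization in \cite{lemopa2}), and refer the reader there for the details rather than reproducing them.
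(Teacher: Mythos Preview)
Your proposal is correct and aligns with the paper's treatment: Theorem~\ref{thliminf-intro} is not proved in this paper at all but is simply quoted from \cite{lemopa1} (note the citation in the theorem header and the phrase ``Recall the following result'' preceding it). Your sketch of the rigidity/compactness/lower-semicontinuity argument is accurate and more detailed than what the paper provides, and your final conclusion---to refer the reader to \cite{lemopa1}---is exactly what the paper does.
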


In order to prove that the linear bending functional (\ref{limitenergy}) restricted to
${\mathcal V}$ is the $\Gamma$-limit of the rescaled three dimensional nonlinear elasticity
energy $(1/e^h) E^h$ we also need to establish the limsup counterpart of the $\Gamma$-convergence
statement. This is the final contribution of this paper which we are formulating in the following theorem.
For a full discussion of this topic in this context see \cite{FJMhier}
and \cite{lemopa1}.

\begin{theorem}\label{thlimsup-intro}
Let $N>1$ and assume that $S$ is developable of class $\mathcal{C}^{2N,1}$
and satisfying (\ref{convex}). Assume that
\begin{equation}\label{ass33}
e^h= o(h^{\beta_N}).
\end{equation}
Then for every $V\in\mathcal{V}$ there exists a sequence
$u^h\in W^{1,2}(S^h,{\mathbb R}^3)$ such that:
\begin{itemize}
\item[(i)] the rescaled deformations $y^h(p+t\vec n)=u^h(p+th/h_0 \vec n)$
converge in $W^{1,2}(S^{h_0})$ to $\pi$.
\item[(ii)] the scaled average displacements $V^h$ given in (\ref{Vh-intro})
converge in $W^{1,2}(S)$ to $V$.
\item[(iii)] $\lim_{h\to 0} {1}/{e^h} E^h(u^h) = \mathcal{I}(V)$.
\end{itemize}
\end{theorem}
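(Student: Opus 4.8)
The plan is to construct the recovery sequence by the standard ansatz of Kirchhoff--von K\'arm\'an type, adapted to the scaling $e^h = o(h^{\beta_N})$ with $\beta_N = 2 + 2/N$. The starting point is Theorem \ref{th_density-intro}: it suffices to prove (i)--(iii) for $V\in\mathcal V\cap\mathcal C^{2N-1,1}(\bar S,\mathbb R^3)$, since the general case follows by a diagonal argument once we observe that $V\mapsto\mathcal I(V)$ is continuous in $W^{2,2}(S)$ and that the energies are controlled by this norm. Fix such a smooth-enough $V$. Set $\e = \e^h = (e^h)^{1/2}/h \cdot$ (a suitable normalization) so that $\e^h/h^2 \to 0$ but $\e^h \ge h^{2/N}$ roughly, i.e. $\e^h$ is comparable to $(e^h)^{1/2}$ rescaled; the point of the hypothesis $e^h = o(h^{\beta_N})$ is precisely that $\e^h$ can be chosen so that $h \ll (\e^h)^N$, which is what allows the $N$th order infinitesimal isometry error to be absorbed.

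Next I would invoke Theorem \ref{th_nth-intro}: since $S\in\mathcal C^{2N,1}$ satisfies \eqref{convex} and $V\in\mathcal V\cap\mathcal C^{2N-1,1}$, there is an equibounded family $w_\e\in\mathcal C^{1,1}(S,\mathbb R^3)$ so that $\psi_\e = \mathrm{id} + \e V + \e^2 w_\e$ is an $N$th order infinitesimal isometry on $S$, meaning $\|(\nabla\psi_\e)^T\nabla\psi_\e - \mathrm{Id}\|_{L^\infty(S)} = \mathcal O(\e^{N+1})$. Then I would define the three-dimensional deformation on $S^h$ by the usual normal expansion
\begin{equation*}
u^h(p + t\vec n(p)) = \psi_{\e^h}(p) + t\, \vec b_{\e^h}(p) + \frac{t^2}{2}\, \vec d_{\e^h}(p),
\end{equation*}
where $\vec b_\e$ is chosen (via a Gram--Schmidt/Cramer's rule correction of $\partial_1\psi_\e \times \partial_2\psi_\e$) so that $\nabla\psi_\e$ together with $\vec b_\e$ forms an almost-orthonormal frame up to the right order, and where $\vec d_\e$ is the second-order warping term selected pointwise to make $\mathcal Q_2$ attain its minimum, exactly as in \cite{FJMhier, lemopa1}. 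The scalar displacement $V\vec n$ enters through the $\mathcal C^{1,1/2}$ (indeed $\mathcal C^{1,1}$ here, since $V$ is smooth) structure guaranteed by Theorem \ref{prop1-1storder}, and it is $\nabla(V\vec n) - A\Pi$ restricted to the tangent space, i.e. the second-order term in the metric, that produces the bending content $\mathcal I(V)$ in \eqref{limitenergy}.

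The energy estimate then proceeds by Taylor-expanding $W$ around $SO(3)$ using the $\mathcal C^2$ regularity and quadratic growth of $W$: on $S^h$ one writes $\nabla u^h = R_\e(\mathrm{Id} + G^h)$ with $R_\e$ a rotation field and $\|G^h\|_{L^\infty} = \mathcal O(\e^{N+1}) + \mathcal O(h/\e^N\cdot\text{junk}) + \mathcal O(h\,\e \cdot\text{curvature})$, so that $W(\nabla u^h) = \frac12\mathcal Q_3(G^h) + o(|G^h|^2)$, and $\frac1h\int_{S^h} W(\nabla u^h) = \frac{h^2}{24}\int_S \mathcal Q_2(p, (\nabla(A\vec n) - A\Pi)_{tan})\,\e^2 + o(h^2\e^2)$ after integrating the $t$-profile. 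Dividing by $e^h \sim h^2\e^2$ gives the limit $\mathcal I(V)$; the key cancellations are that the $\e^0$ and $\e^1$ contributions to the metric vanish (isometry and infinitesimal isometry), and that the leftover change-of-metric defect $\mathcal O(\e^{N+1})$ contributes $\mathcal O(\e^{2N+2})/e^h = \mathcal O(h^{-2}\e^{2N})\to 0$ precisely because $e^h = o(h^{\beta_N})$ forces $h \ll \e^N$. Claims (i) and (ii) about $W^{1,2}$ convergence of $y^h$ to $\pi$ and of $V^h$ to $V$ follow directly from the explicit form of the ansatz and the equiboundedness of $w_\e$.

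The main obstacle is the bookkeeping of all error terms in $G^h$ simultaneously and verifying that each is $o((e^h)^{1/2})$ in the right norm: there are competing scales --- the metric defect $\e^{N+1}$ coming from the matching construction, the through-thickness terms of order $h$ and $h^2$ coming from curvature and the warping field, and the cross terms --- and one must check that the choice of $\e^h$ tied to $e^h = o(h^{\beta_N})$ reconciles all of them, in particular that the loss of regularity in Theorem \ref{th_nth-intro} (which produces only $w_\e\in\mathcal C^{1,1}$, hence $u^h$ merely $\mathcal C^{1,1}$ in-plane) is still enough to run the Taylor expansion of $W$ with an $L^\infty$ remainder. A secondary technical point is the diagonalization in the density step: one must arrange that the construction is continuous enough in $V$ (with respect to the $W^{2,2}$ topology) that approximating $V$ by smooth $V_n$ and then extracting $h_n$ yields a genuine recovery sequence for the original $V$; this is routine given the bound \eqref{bd1} and the continuity of $\mathcal Q_2$, but must be stated.
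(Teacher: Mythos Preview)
Your outline matches the paper's proof essentially step for step: reduce to smooth $V$ via Theorem~\ref{th_density-intro} and a diagonal argument, set $\e=\sqrt{e^h}/h$, apply Theorem~\ref{th_nth-intro} to obtain the $N$th order isometry $u_\e=\mathrm{id}+\e V+\e^2 w_\e$, then build $u^h$ by the normal-direction ansatz $u^h(p+t\vec n)=u_\e(p)+t\,\vec n_\e(p)+\tfrac{t^2}{2}\e\,d^h(p)$ with the normal $\vec n_\e$ to $u_\e(S)$ (your $\vec b_\e$) and a warping field $d^h$ chosen to realize $\mathcal Q_2$, and finally Taylor-expand $W$ around $\mathrm{Id}$ after writing $K^h=(\nabla_h y^h)^T\nabla_h y^h-\mathrm{Id}$.

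One correction: your scaling inequality is written backwards in two places. You need $\e^N=o(h)$, not $h\ll\e^N$. Indeed, with $\e=\sqrt{e^h}/h$ one has $\e^{N+1}/\sqrt{e^h}=(e^h)^{N/2}/h^{N+1}$, and the hypothesis $e^h=o(h^{\beta_N})=o(h^{2(N+1)/N})$ is exactly the statement that this ratio tends to zero, i.e.\ $\e^{N+1}=o(\sqrt{e^h})$; equivalently $h^{-2}\e^{2N}\to 0$ because $\e^N\ll h$. With that sign fixed your error bookkeeping is correct and nothing else needs to change.
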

\begin{proof} We shall construct a recovery sequence for developable surfaces, based on
Theorems \ref{th_density-intro} and \ref{th_nth-intro}.
Indeed, by the density result and the continuity of
the functional $\mathcal{I}$ with respect to the strong topology of $W^{2,2}(S)$, we can assume
$V\in {\mathcal V}\cap{\mathcal C}^{2N-1,1}(\bar S, {\mathbb R}^3)$.
In the general case the result will then follow through  a diagonal argument.

\smallskip

{\bf 1.}  Let $\e=\sqrt{e^h}/h$ so $\e\to0$ as $h\to0$, by assumption \eqref{scaling-intro}.
Therefore, by Theorem \ref{th_nth-intro} there exists a sequence
$w_\e:\bar S\longrightarrow \mathbb{R}^3$, equibounded in
$\mathcal{C}^{1,1}(\bar S)$, such that for all small $h>0$:
\begin{equation}\label{ex-iso}
u_\e = \mathrm{id} +\e V + \e^2w_\e
\end{equation}
is a (generalized) $N$th order infinitesimal isometry.
Note that by (\ref{ass33}) we have:
$$ \frac {\e^{N+1}}{\sqrt{e^h}} = \frac{(\sqrt{e^h})^{N}}{h^{N+1}}= o (h^{N+1})/h^{N+1} \to 0, $$
hence $\e^{N+1}= o(\sqrt{e^h})$. We may thus replace $\mathcal{O}(\e^{N+1})$ with
$o(\sqrt{e^h})$.

For every $p\in S$, let $\vec n_\e(p)$ denote the unit normal vector
to $u_\e(S)$ at the point $u_\e(p)$. Clearly,
$\vec n_\e\in {\mathcal C}^{0,1}(\bar S,{\mathbb R}^3)$, while by \eqref{ex-iso}:
\begin{equation}\label{ne-exp}
\vec n_\e = \frac{\partial_{\tau_1}u_\e \times \partial_{\tau_2}u_\e}
{|\partial_{\tau_1}u_\e \times \partial_{\tau_2}u_\e|} = \vec n +\e A\vec n + \mathcal{O}(\e^2).
\end{equation}
Here $\tau_1,\tau_2\in T_pS$ are such that $\vec n=\tau_1\times\tau_2$.
Note that since $N>1$ and $u_\e$ is a (generalized) $N$th order isometry, we have
$ |\partial_{\tau_i} u_\e|^2 = 1 +  \mathcal{O}(\e^{3})$ and  $|\partial_{\tau_1} u_\e
\cdot \partial_{\tau_2} u_\e| =  \mathcal{O}(\e^{3}) $, which implies that:
$$ |\partial_{\tau_1}u_\e \times \partial_{\tau_2}u_\e| = 1+  \mathcal{O}(\e^{3}).$$
Using now the Jacobi identity for vector product and the fact that $A\in so(3)$, we
arrive at (\ref{ne-exp}).

Here we introduce the recovery sequence $u^h$ as required by the statement of the theorem.
Note that the following suggestion for $u^h$ is in accordance with the one
used in \cite{FJMM_cr} in the framework of the purely nonlinear bending theory
for shells, corresponding to the scaling regime
$\beta=2$. Also, a comparison with the similar proof 
in \cite{lemopa3} for convex shells, with which much of the following calculations overlap, 
is elucidating. Indeed, the main difference here is that instead of an exact isometry of 
the given shell, we make use of an $N$th order isometry $u_\varepsilon$. 
Consider the sequence of 
deformations $u^h\in W^{1,2}(S^h,{\mathbb R}^3)$ defined by:
\begin{equation}\label{rec_seq}
u^h(p+t\vec n) = u_\varepsilon(p) + t\vec n_\varepsilon(p)
+ \frac{t^2}{2}\varepsilon d^h(p),
\end{equation} where $\varepsilon$ depends on $h$ as above. The vector field $d^h\in W^{1,\infty}(S,\mathbb{R}^3)$ is taken so that:
\begin{equation}\label{nd01h}
\lim_{h\to 0} h^{1/2} \|d^h\|_{W^{1,\infty}(S)} = 0,
\end{equation}
and:
\begin{equation}\label{warp}
\lim_{h\to 0} d^h = 2c\left(p,{\rm sym}(\nabla(A\vec n) - A\Pi)_{tan}\right) \quad
\mbox{ in } L^\infty(S),
\end{equation}
where $c(p,F_{tan})$ denotes the unique vector satisfying
${\mathcal Q}_2(p, F_{tan})={\mathcal Q}_3(F_{tan} +c\otimes \vec n(p)+\vec
n(p)\otimes c)$ (see \cite[Section~6]{lemopa1}). We observe that, as
$V\in{\mathcal C}^{1,1}(\bar S, {\mathbb R}^3)$ and $c$ depends linearly
on its second argument, the vector field:
\begin{equation}\label{def-zeta}
\zeta(p)=c(p, {\rm sym}(\nabla(A\vec n) - A\Pi)_{tan})
\end{equation}
belongs to $L^\infty(S,\mathbb{R}^3)$.
Properties (i) and (ii) now easily follow from the uniform bound on $w_\e$
and the normalization \eqref{nd01h}.

\smallskip

{\bf 2.}
To prove (iii) it is convenient to perform a change of variables in
the energy $E^h(u^h)$, so to express it in terms of the scaled deformation $y^h$.
By a straightforward calculation:
\begin{equation}\label{int-nv}
\frac{1}{e^h} E^h(u^h) = \frac{1}{e^h}\int_S \fint_{-h_0/2}^{h_0/2}
W(\nabla_h y^h(p+t\vec n))\det[\mbox{Id}+th/h_0\Pi(p)]~\mbox{d}t\mbox{d}p,
\end{equation}
where $\nabla_h y^h(p+t\vec n)=\nabla u^h(p+th/h_0\vec n)$.
We also have:
\begin{equation}\label{form2}
\begin{split}
\nabla_h y^h(p + t\vec n) \vec n(p) & = \frac{h_0}{h} \partial_{\vec n}y^h(p+t\vec n)
= \vec n_\e(p) + th/h_0\e d^h(p),\\
\nabla_h y^h(p + t\vec n)\tau & = \nabla y^h(p+t\vec n)\cdot (\mbox{Id} +
t\Pi(p)) (\mbox{Id} + th/h_0\Pi(p))^{-1}\tau \\
& = \Big(\nabla u_\e(p) +th/h_0\nabla\vec n_\e(p) + \frac{t^2}{2h_0^2}h^2\e
\nabla d^h(p)\Big)(\mbox{Id} + th/h_0\Pi(p))^{-1}\tau,
\end{split}
\end{equation}
for all $p\in S$ and $\tau\in T_pS$.

From \eqref{ex-iso}, \eqref{ne-exp} and \eqref{nd01h} it follows that
$\|\nabla_h y^h-\mbox{Id}\|_{L^\infty(S^{h_0})}\to 0$ as $h\to 0$.
By polar decomposition theorem, $\nabla_h y^h$ is a product of a proper rotation and the well defined
square root $\sqrt{(\nabla_h y^h)^T\nabla_h y^h}$. By frame indifference of $W$ we
deduce that:
$$ W(\nabla_h y^h) = W\left(\sqrt{(\nabla_h y^h)^T\nabla_h y^h}\right)
= W\left(\mbox{Id} + \frac{1}{2} K^h + \mathcal{O}(|K^h|^2)\right), $$
where the last equality is obtained by the Taylor expansion, with:
$$ K^h =  (\nabla_h y^h)^T\nabla_h y^h - \mbox{Id}. $$
As $\|K^h\|_{L^\infty(S^{h_0})}$ is infinitesimal as $h\to 0$, we can expand
$W$ around $\mbox{Id}$, using the formula:
\begin{equation*}
\displaystyle W(\mbox{Id} + K) = \frac 12 D^2W(\mbox{Id})(K,K) + \int_0^1 (1-s) [D^2W(\mbox{Id}+sK) - D^2W(\mbox{Id})] (K,K) \mbox{d}s, 
\end{equation*} and obtain, in view of using the assumption 
that $W$ is ${\mathcal C}^2$ in a neighborhood of identity:
\begin{equation}\label{Wdopp}
\frac{1}{e^h} W(\nabla_h y^h) = \frac{1}{2} \mathcal{Q}_3\left(\frac{1}{2\sqrt{e^h}} K^h +
\frac{1}{\sqrt{e^h}}\mathcal{O}(|K^h|^2)\right) +
\frac{1}{{e^h}}o(|K^h|^2).
\end{equation}
Using \eqref{form2} we now calculate $K^h$. We first consider the tangential
minor of $K^h$, as usual conceived as a symmetric bilinear form on $S$:
\begin{equation*}
\begin{split}
K^h_{tan}(p + t\vec n) & =  (\mbox{Id} + th/h_0\Pi)^{-1}\Big[\mbox{Id} + \mathcal{O}(\e^{N+1})
+2 th/h_0\, {\rm sym}((\nabla u_\e)^T\nabla\vec n_\e)\\
& \qquad + t^2 h^2/h_0^2(\nabla \vec n_\e)^T\nabla\vec n_\e
+ o(\sqrt{e^h})\Big](\mbox{Id} + th/h_0\Pi)^{-1} - \mbox{Id}\\
& = (\mbox{Id} + th/h_0\Pi)^{-1}\Big[
2 th/h_0\, {\rm sym}((\nabla u_\e)^T\nabla\vec n_\e) -2 th/h_0\Pi\\
& \qquad + t^2 h^2/h_0^2(\nabla \vec n_\e)^T\nabla\vec n_\e - t^2 h^2/h_0^2\Pi^2
\Big](\mbox{Id} + th/h_0\Pi)^{-1} + o(\sqrt{e^h}),
\end{split}
\end{equation*}
where we used the fact that $u_\e$ is a generalized $N$th order
infinitesimal isometry to see that
$(\nabla u_\e)^T\nabla u_\e = \mbox{Id} +
\mathcal{O}(\e^{N+1})= \mbox{Id} + o(\sqrt{e^h})$, and the identity:
$$F_1^{-1}FF_1^{-1}-\mbox{Id} = F_1^{-1}(F-F_1^2)F_1^{-1}.$$
By \eqref{ex-iso} and \eqref{ne-exp} we also deduce:
\begin{equation*}
\begin{split}
{\rm sym} ((\nabla u_\e)^T\nabla\vec n_\e) &
= \Pi +\e\, {\rm sym}(\nabla(A\vec n)-A\Pi) +{\mathcal O}(\e^2), \\
(\nabla \vec n_\e)^T\nabla\vec n_\e & = \Pi^2+{\mathcal O}(\e).
\end{split}
\end{equation*}
Combining these two identities with the expression of $K^h_{tan}$ found above,
we conclude that:
\begin{equation*}
K^h_{tan}(p + t\vec n) =  \sqrt{e^h}(\mbox{Id} + th/h_0\Pi)^{-1}\Big[
2 t/h_0\, {\rm sym}(\nabla(A\vec n)-A\Pi)\Big](\mbox{Id} + th/h_0\Pi)^{-1} + o(\sqrt{e^h}).
\end{equation*}
Now, as $|\vec n_\e|=1$, the normal minor of $K^h$ is calculated by means of
(\ref{form2}) as:
\begin{equation*}
\vec n^T K^h(p+t\vec n)\vec n = |(\nabla_h y^h)\vec n|^2 - 1 =
2th/h_0\e d^h \cdot \vec n_\e+ o(\sqrt{e^h})
= 2t/h_0 \sqrt{e^h} d^h \cdot \vec n +  o(\sqrt{e^h}).
\end{equation*}
The remaining coefficients of the symmetric matrix $K^h(p+ t\vec n)$ are,
for $\tau\in T_x S$:
\begin{equation*}
\begin{split}
 \tau^T K^h(p+t\vec n)\vec n & = (\vec n_\e +th/h_0\e d^h)^T
\Big(\nabla u_\e+th/h_0\nabla \vec n_\e+
\frac{t^2}{2h_0^2}h^2\e\nabla d^h\Big)(\mbox{Id} + th/h_0\Pi)^{-1}\tau\\
& = t/h_0\sqrt{e^h}(d^h)^T\nabla u_\e(\mbox{Id} + th/h_0\Pi)^{-1}\tau + o(\sqrt{e^h}),
\end{split}
\end{equation*}
where we have used that $\vec n_\e^T\nabla\vec n_\e= \vec n_\e^T\nabla u_\e= 0$.

\smallskip

{\bf 3.} From the previous computations we finally deduce, with some abuse of notation, that:
\begin{equation}\label{Kconv}
\lim_{h\to 0} \frac{1}{2\sqrt{e^h}} K^h = \frac{t}{h_0}K(p)_{tan}
+ \frac{t}{h_0}(\zeta\otimes \vec n + \vec n\otimes \zeta)
\quad \mbox{ in } L^\infty(S^{h_0}),
\end{equation}
where the vector field $\zeta$ is defined in \eqref{def-zeta} and
the symmetric bilinear form $K_{tan}\in L^\infty(S)$ is:
\begin{equation}\label{Kdef}
K(p)_{tan} = {\rm sym} (\nabla(A\vec n) - A\Pi)_{tan}.
\end{equation}
Using \eqref{int-nv}, \eqref{Wdopp}, \eqref{Kconv} and the dominated
convergence theorem, we obtain:
\begin{equation*}
\begin{split}
\lim_{h\to 0} \frac{1}{e^h} E^h(u^h) & =
\lim_{h\to 0} \frac{1}{e^h} \int_S \fint_{-h_0/2}^{h_0/2} W(\nabla_h y^h)
\det (\mbox{Id} + th/h_0\Pi)~\mbox{d}t\mbox{d}p\\
& = \frac{1}{2} \int_S \fint_{-h_0/2}^{h_0/2} \mathcal{Q}_3 \Big(\frac{t}{h_0}K(p)_{tan}
+ \frac{t}{h_0}(\zeta\otimes \vec n + \vec n\otimes \zeta)\Big)~\mbox{d}t\mbox{d}p \\
& = \frac{1}{2}\int_S \fint_{-h_0/2}^{h_0/2}\frac{t^2}{h_0^2}\mathcal{Q}_2
\big(p,{\rm sym} (\nabla(A\vec n) - A\Pi)_{tan}\big)~\mbox{d}t\mbox{d}p,
\end{split}
\end{equation*}
the last equality following in view of \eqref{def-zeta} and
\eqref{Kdef}. Property (iii) now follows, upon integration in $t$
in the last integral above.
\end{proof}

\medskip

\end{document}